\newtheorem{theorem}{Theorem}[section]
\newaliascnt{lemma}{theorem}
\newtheorem{lemma}[lemma]{Lemma}
\newaliascnt{fact}{theorem}
\newtheorem{fact}[fact]{Fact}
\newaliascnt{corollary}{theorem}
\newtheorem{corollary}[corollary]{Corollary}
\newaliascnt{proposition}{theorem}
\newtheorem{proposition}[proposition]{Proposition}
\newaliascnt{df}{theorem}
\theoremstyle{definition}\newtheorem{df}[df]{Definition}
\newaliascnt{remark}{theorem}
\theoremstyle{remark}\newtheorem{remark}[remark]{Remark}
\numberwithin{equation}{section}
\DeclareMathOperator{\Pol}{\mathcal{O}}
\DeclareMathOperator{\id}{id}
\DeclareMathOperator{\perm}{perm}
\DeclareMathOperator{\sgn}{sgn}
\DeclareMathOperator{\spn}{span_{\mathbb{C}}}
\newcommand{\GG}{\mathbb{G}}
\newcommand{\HH}{\mathbb{H}}
\newcommand{\bigslant}[2]{{\raisebox{.2em}{$#1$}\left/\raisebox{-.2em}{$#2$}\right.}}
\newcommand{\CA}{$C^{\ast}$-algebra}
\newcommand{\HA}{Hopf ${}^{\ast}$-algebra}
\newcommand{\nc}{{\text{---}\scriptscriptstyle{\|\cdot\|}}}
\newcommand\comp{\!\circ\!}
\newcommand{\mult}[2]{\mathsf{M}({#1}\otimes{#2})}
\title[Remark on Hopf images in $S_n^+$]{Remark on Hopf images in quantum permutation groups $S_n^+$}
\author{Pawe{\l} J{\'o}ziak}
\address{Institute of Mathematics of the Polish Academy of Sciences,
ul.~\'Sniadeckich 8, 00--656 Warszawa, Poland and  Institute of mathematics, University of Wroc{\l}aw, pl. Grunwaldzki 2/4, 50--384 Wroc{\l}aw, Poland}  \email{pjoziak@impan.pl}
\subjclass[2010]{Primary: 46L89, 20G42 Secondary: 16W35}
\begin{document}

\maketitle

\begin{abstract}
Motivated by a question of A.~Skalski and P.M.~So{\l}tan about inner faithfulness of the S.~Curran's map, we revisit the results and techniques of T.~Banica and J.~Bichon's Crelle paper and study some group-theoretic properties of the quantum permutation group on $4$ points. This enables us not only to answer the aforementioned question in positive in case $n=4, k=2$, but also to classify the automorphisms of $S_4^+$, describe all the embeddings $O_{-1}(2)\subset S_4^+$ and show that all the copies of $O_{-1}(2)$ inside $S_4^+$ are conjugate. We then use these results to show that the criterion we applied to answer the aforementioned question does not admit converse.
\end{abstract}

\section*{Introduction}
Let $\GG$ be a compact quantum group (in the sense of Woronowicz, but throughout the note we will not need any of the analytic features of the associated Hopf-\CA), let $\Pol(\GG)$ be its associated coordinate ring and assume $\beta\colon\Pol(\GG)\to\mathcal{B}$ is a ${}^*$-representation of $\Pol(\GG)$ as a ${}^*$-algebra in some ${}^*$-algebra. Via abstract Gelfand-Naimark duality, such a maps corresponds to a map $\hat{\beta}\colon\mathbb{X}\to\GG$ and it is natural to ask what is the smallest quantum subgroup containing $\hat{\beta}(\mathbb{X})$, or -- in other words -- what the quantum subgroup generated by $\hat{\beta}(\mathbb{X})\subset\GG$ is. The answer to this type of questions was studied earlier in \cite{Ban14,BB10,BCV,SS16} in the case of compact quantum groups and later extended to locally compact quantum groups in \cite{PJphd,JKS16}.

The concept of a subgroup is central to treating quantum groups from the group-theoretic perspective and many efforts were made to provide accurate descriptions of various aspects of this concept, as well as providing some nontrivial examples, see, e.g., \cite{BB09,BY14,DKSS,Pod95} and many others. Throughout this manuscript, we deal with subgroups of the quantum permutation groups, first introduced by Wang in \cite{Wang98}. It was observed in \cite{KS09} that quantum permutations can be used to study distributional symmetries of infinite sequences of non-commutative random variables that are identically distributed and free modulo the tail algebra, thus extending the classical de Finetti's theorem to the quantum/free realm.

Another extension of de Finetti's theorem was given by Ryll-Nardzewski: he observed that instead of invariance of joint distributions under permutations of random variables it is enough to consider subsequences and compare these type of joint distributions to obtain the same conclusion. What this theorem really boils down to is the fact that one can canonically treat the set $I_{k,n}$ of increasing sequences (of indices) as subset of all permutations $S_n$, and this subset is big enough to generate the whole symmetric group: $\langle I_{k,n}\rangle=S_n$, unless $k=0$ or $k=n$.

This viewpoint was utilized in \cite{Cur11} by Curran to extend theorem of Ryll-Nardzewski to the quantum case: he introduced the quantum space of quantum increasing sequences $I^+_{k,n}$ and defined how to canonically extend the quantum increasing sequence to a quantum permutation in $S_n^+$. The analytical properties of the \CA\ $C(I^+_{k,n})$ were strong enough to provide an extension of Ryll-Nardzewski to the quantum/free case. However, these results did not say anything about the subgroup of quantum permutation group that is generated by quantum increasing sequences.

If the analogy with the classical world is complete, one would expect that in fact $\overline{\langle I^+_{k,n}\rangle}=S_n^+$ for all $n$ and $k\neq0,n$. This was ruled out already in \cite{SS16}, where it was observed that $\overline{\langle I^+_{k,n}\rangle}=S_n$ whenever $k=1,n-1$. The second best thing one could hope for is that $\overline{\langle I^+_{k,n}\rangle}=S_n^+$ for at least one $k\in\{2,\ldots,n-2\}$, as this would explain the results of Curran in a more group-theoretic manner. In general, \cite[Question 7.3]{SS16} asks for the complete description of all $\overline{\langle I^+_{k,n}\rangle}$ and emphasizes the case $n=4$ and $k=2$ as the first non-trivial case to study. We give a positive answer in this case using the following lower bound criterion for the Hopf image: assume $\beta\colon C^u(\GG)\to \mathsf{B}$ is a morphism and assume $X$ is the set of all characters of $\mathsf{B}$. Denoting by $\HH$ the Hopf image of $\beta$ we have that $\overline{\langle X\rangle}\subset\HH$. The \CA\ language is mainly used for convenience and it is straightforward to adapt this criterion to the purely algebraic situation. It should be noted that the inclusion can be proper for some analytical reasons, but it can be shown that even when restricting to the setting of topological generation of quantum groups in the spirit of \cite{BCV}, such an inclusion can still be proper. 

In the course of analyzing the impossibility of getting strict equality in the aforementioned criterion even in the analytically best-behaved case of coamenable compact quantum group of Kac type, we study some group-theoretic properties of the quantum permutation group $S_4^+$. Namely, we classify all Hopf automorphisms of $C(S_4^+)$ and show that there are three copies of $O_{-1}(2)$ appearing as quantum subgroups of $S_4^+$, and that they are conjugate. 

The manuscript is organized as follows. \autoref{sec:cqg-hi-crit} serves mainly as preliminaries needed to settle the notation for compact quantum groups (\autoref{sec:cqg}), Hopf images (\autoref{sec:hopfimage}) and quantum permutations groups together with quantum increasing sequences (\autoref{sec:qpg-qis}). However, the main criterion is also contained there as \autoref{thm:criterion}, as well as the answer to \cite[Question 7.3]{SS16}, as \autoref{thm:hopfimage}. In \autoref{sec:gtprop} we turn to studying group-theoretic properties of $S_4^+$. We introduce the objects we need in \autoref{sec:objects} and later we revise the technique of cocycle twists in \autoref{sec:twistingsgeneral}. We also introduce the concept of characteristic subgroups in \autoref{sec:characteristic} in the context of compact quantum groups. In \autoref{sec:twistingapplied} we recall how the technique of cocycle-deformation is applied to $S_4^+$ and use the results of \autoref{sec:characteristic} to classify quantum automorphisms of $S_4^+$. In \autoref{sec:embeddings1} we classify embeddings $O_{-1}(2)\subset S_4^+$ and use them to show in \autoref{sec:conclusions} that the inclusion in our criterion, \autoref{thm:criterion}, can be proper even in the analytically best-behaved setting. We also gather there also some other consequences of our results.

\section{Compact quantum groups, Hopf image and Criterion}\label{sec:cqg-hi-crit}
Throughout the manuscript, we will use tensor products of different structures, mainly ${}^{\ast}$-algebras (the algebraic tensor product) and $C^{\ast}$-algebras (the minimal tensor product). It will be denoted using the same symbol $\otimes$, as this should be clear from the context which tensor product procedure is evoked at the time. The \CA\ formalism is used only for convenience, as all the results rely only on their algebraic features. The $C^*$-algebra of compact operators on a Hilbert space $\mathcal{H}$ is denoted $\mathsf{K}(\mathcal{H})$ and for a $C^*$-algebra $\mathsf{A}$ we denote by $\mathsf{M}(\mathsf{A})$ its multiplier algebra.
\subsection{Compact quantum groups}\label{sec:cqg}
In this section we recall the basic definitions from the theory of compact quantum groups. We stick to the formalism established in \cite{Wor87, Wor95}. A unital \CA\ $\mathsf{A}$ endowed with a ${}^{\ast}$-homomorphism $\Delta\colon \mathsf{A}\to \mathsf{A}\otimes \mathsf{A}$ satisfying the coassociativity condition: $(\Delta\otimes \id)\comp\Delta=(\id\otimes\Delta)\comp\Delta$ is called a \emph{Woronowicz algebra}, if the \emph{cancellation laws} hold:
\[\spn^{\nc}\big((\mathds{1}\otimes \mathsf{A})\Delta(\mathsf{A})\big)=\mathsf{A}\otimes\mathsf{A} = \spn^{\nc}\big((\mathsf{A}\otimes \mathds{1})\Delta(\mathsf{A})\big)\]
where $\spn^{\nc}$ denotes the norm closure of the linear span. 

Such an algebra corresponds to a \emph{compact quantum group} $\GG$ via the identification $\mathsf{A}=C(\GG)$, the algebra of continuous functions on $\GG$. It can be endowed with a unique state $h\in \mathsf{A}^{\ast}$, called the \emph{Haar state}, which is left and right invariant:
\[(\id\otimes h)\comp\Delta=(h\otimes\id)\comp\Delta=h(\cdot)\mathds{1}.\]
$\mathsf{A}$ contains a unique dense Hopf ${}^{\ast}$-subalgebra $\Pol(\GG)$ (i.e.~the coproduct $\Delta$ restricts to $\Pol(\GG)$); it is spanned by matrix coefficients of unitary representations of $\GG$. $\Pol(\GG)$ can have, a priori, a plethora of different $C^{\ast}$-norms: the norm coming from GNS-representation of the Haar state ($C^r(\GG)=\overline{\Pol(\GG)}\subseteq\mathsf{B}(L^2(\GG))$), the norm of $\mathsf{A}$ and the universal $C^{\ast}$-norm need not coincide. For further discussion on this topic, see e.g. \cite{KS12}. In any case, there are always quotient maps
\[C^u(\GG)\to C(\GG)\to C^r(\GG)\]
where $C(\GG)$ denotes a general $C^{\ast}$-completion. If the quotient map $\Lambda\colon C^u(\GG)\to C^r(\GG)$ is injective, we call $\GG$ coamenable and declare that $C^u(\GG)=C^r(\GG)$ and $\Lambda=\id$. In this note we mainly deal with coamenable compact quantum groups and use the symbol $C(\GG)$ to describe that \CA.

The most studied examples are the \emph{compact matrix quantum groups}: $\GG$ is a compact matrix quantum group if the Woronowicz algebra $C^u(\GG)$ can be endowed with a \emph{fundamental} corepresentation $u\in M_n(C^u(\GG))=\mult{\mathsf{K}(\mathbb{C}^n)}{C^u(\GG)}$: denoting $u_{i,j}=(\langle e_i|\cdot|e_j\rangle\otimes\id)u$ for a fixed basis $(e_i)_{1\leq i\leq n}\subset \mathbb{C}^n$, orthonormal with respect to an inner product $\langle\cdot|\cdot\rangle$, we ask for: 
\[\Delta(u_{i,j})=\sum_{k=1}^n u_{i,k}\otimes u_{k,j}\]
and \[\langle \{u_{i,j}:1\leq i,j\leq n\}\rangle=\Pol(\GG)\]
where $\langle X\rangle$ denotes the ${}^{\ast}$-algebra generated by elements of $X$ (note we used the symbol $\langle\quad\rangle$ also to denote the subgroup generated by a given subset, this shall cause no confusion). 

Any compact quantum group $\GG$ has its maximal classical subgroup $Gr(\widehat{\GG})$, also called the group of characters of $\GG$ or the intrinsic subgroup of $\widehat{\GG}$. It is given as follows: consider the universal enveloping $C^{\ast}$-algebra $C^u(\GG)$ and the commutator ideal of it, i.e.~the ideal generated by $\{xy-yx\colon x,y\in C^u(\GG)\}$, call this ideal $I$. Then the quotient map \[q_{\GG}\colon C^u(\GG)\to\bigslant{C^u(\GG)}{I}=:C(Gr(\widehat{\GG}))\]
identifies the spectrum of the (commutative) $C^{\ast}$-algebra $\bigslant{C^u(\GG)}{I}$, denoted $Gr(\widehat{\GG})$, with a closed (quantum) subgroup of $\GG$. The commutativity of the $C^{\ast}$-algebra  $C(Gr(\widehat{\GG}))$ ensures us that it is the only possible completion of $\Pol(Gr(\widehat{\GG}))$, so we drop the ${\cdot}^u$ decoration. A thorough description of the group of characters of a given (locally) compact quantum group $\GG$ can be found in \cite{KN13}
\subsection{Hopf image}\label{sec:hopfimage}
The Hopf image construction, studied in detail in the case of compact quantum groups in \cite{BB10, SS16} and in the case of locally compact quantum groups in \cite{JKS16}, is concerned with the following situation. Consider a (closed) subset in a (locally) compact group $X\subseteq G$. We are looking for the closed subgroup of $G$, say $H$, which is generated by the set $X$, i.e.~$\overline{\langle X\rangle}=H$. Under Gelfand-Naimark duality, this corresponds to finding the final/terminal object in the category, whose objects are defined with the aid of the following diagram:
 \begin{center} \begin{tikzpicture}
  [bend angle=36,scale=2,auto,
pre/.style={<<-,shorten <=1pt,semithick},
post/.style={->>,shorten >=1pt,semithick}]
\node (G) at (-0.7,0.7) {$C(G)$};
\node (X) at (0.7,0.7) {$C(X)$}
edge [pre] node[auto,swap] {$\beta$} (G);
\node (H) at (0,0) {$C(H)$}
edge [pre] node[auto,swap] {$\pi$} (G)
edge [post] node[auto] {$\tilde{\beta}$} (X);
\end{tikzpicture}\end{center} 
In the above diagram, $\beta$ is the Gelfand dual to the embedding $X\subset G$, $\beta'$ is the Gelfand dual to the embedding $X\subset H$ and $\pi$ is the Gelfand dual to the embedding $H\subset G$. The objects of the aforementioned category are triples consisting of commutative Woronowicz algebras $C(H)$ and maps $\pi$, $\tilde{\beta}$ so that $\pi$ intertwines the coproduct and $\tilde{\beta}\comp\pi=\beta$. In terms of spectra of these \CA s, this category consists of closed subgroups of $G$ containing the set $X$, and we are looking for a minimal one.

Dropping commutativity enables us to discuss \emph{closed quantum subgroup of $\GG$ generated by a map $\beta$}: the Hopf image of a ${}^{\ast}$-homomorphism $\beta\colon C^u(\GG)\to\mathsf{B}$ is the final object of the category, whose objects are triples consisting of Woronowicz algebras $C^u(\HH)$, Hopf ${}^{\ast}$-homomorphism $\pi\colon C^u(\GG)\to C^u(\HH)$ and a ${}^*$-homomorphism $\tilde{\beta}\colon C^u(\HH)\to \mathsf{B}$ such that the following diagram commutes
\begin{center} \begin{tikzpicture}
  [bend angle=36,scale=2,auto,
pre/.style={<<-,shorten <=1pt,semithick},
post/.style={->>,shorten >=1pt,semithick}]
\node (G) at (-0.7,0.7) {$C^u(\GG)$};
\node (X) at (0.7,0.7) {$\mathsf{B}$}
edge [pre] node[auto,swap] {$\beta$} (G);
\node (H) at (0,0) {$C^u(\HH)$}
edge [pre] node[auto,swap] {$\pi$} (G)
edge [post] node[auto] {$\tilde{\beta}$} (X);
\end{tikzpicture}\end{center} 
It should be stressed that the universal $C^{\ast}$-completions have the biggest amount of possible ${}^{\ast}$-homomorphisms $\beta$, so looking for the Hopf images of the maps defined on the universal $C^{\ast}$-completions is the only reasonable from the group-theoretic perspective. For instance, the compact quantum group $\widehat{\mathbb{F}_2}$, the dual to the free group on two generators, has simple reduced completion, and thus in the reduced world the only possible map is inclusion, and if we were to speak of Hopf images treating different $C^{\ast}$-algebras as different quantum groups, the Hopf image of any morphism from $C^{\ast}_r(\mathbb{F}_2) $ is always the whole of $\widehat{\mathbb{F}_2}$ -- even the trivial group is excluded!

Let $\HH_1,\HH_2\subset\GG$ be two closed quantum subgroups (identified via $\pi_i\colon C^u(\GG)\to C^u(\HH)$). Then we say that $\GG$ is topologically generated by $\HH_1$ and $\HH_2$, and write $\GG=\overline{\langle\HH_1,\HH_2\rangle}$, if the Hopf image of the either of the maps $(\pi_1\otimes\pi_2)\comp\Delta$ or $\pi_1\oplus\pi_2$, is the whole $C^u(\GG)$. This notion has several equivalent descriptions, see \cite[Proposition 3.5]{BCV} and \cite[Section 3]{JKS16}.

We now give a criterion showing ``how big'' the Hopf image of a ${}^{\ast}$-homomorphism $\beta\colon C^u(\GG)\to\mathsf{B}$ has to be. The content of the consecutive observation is best seen in the following diagram.
\begin{center} \begin{tikzpicture}
  [bend angle=36,scale=2,auto,
pre/.style={<<-,shorten <=1pt,semithick},
post/.style={->>,shorten >=1pt,semithick}]
\node (G) at (-1.5,1.5) {$C^u(\GG)$};
\node (X) at (1.5,1.5) {$\mathsf{B}$}
edge [pre] node[auto,swap] {$\beta$} (G);
\node (H) at (0,0.6) {$C^u(\HH)$}
edge [pre] node[auto,swap] {$\pi$} (G)
edge [post] node[auto] {$\tilde{\beta}$} (X);
\node (Hclass) at (0,-0.1) {$C(Gr(\widehat{\HH}))$}
edge [pre] node[auto,swap] {$q_{\HH}$} (H);
\node (Gclass) at (-1.5,-1) {$C(Gr(\widehat{\GG}))$}
edge [pre] node[auto,swap] {$q_{\GG}$} (G)
edge [post] node[auto] {$p$} (Hclass);
\node (Xclass) at (1.5,-1) {$C(\sigma(\mathsf{B}))$}
edge [pre] node[auto,swap] {$b$} (Gclass)
edge [pre] node[auto,swap] {$\tilde{b}$} (Hclass)
edge [pre] node[auto,swap] {$q_{\mathsf{B}}$} (X);
\end{tikzpicture}\end{center} 
Here $q_{\GG}\colon C^u(\GG)\to C(Gr(\widehat{\GG}))$ is the canonical embedding of the group of characters $Gr(\widehat{\GG})\subset \GG$ (likewise for $\HH$); $C(\sigma(\mathsf{B}))$ is the quotient of $\mathsf{B}$ by the commutator ideal and $\sigma(\mathsf{B})$ denotes the spectrum of this commutative \CA, $q_{\mathsf{B}}$ denotes this particular quotient map. Now $p$ is obtained as follows: as $q_{\HH}\comp\pi$ has commutative target, it factors through $C(Gr(\widehat{\GG}))$ and $p\comp q_{\GG}=q_{\HH}\comp\pi$. Similarly, we obtain $b$ as the map completing the factorization of $q_{\mathsf{B}}\comp\beta$ through $q_{\GG}$ and $\tilde{b}$ completes the factorization of $q_{\mathsf{B}}\comp\tilde{\beta}$ through $q_{\HH}$.
\begin{theorem}\label{thm:criterion}
If $\HH$ is the Hopf image of the map $\beta$, then the Hopf image of $b$ contains $Gr(\widehat{\HH})$. In other words, the Gelfand dual $\hat{b}\colon \sigma(\mathsf{B})\to Gr(\widehat{\GG})$ satisfies $\overline{\langle{\hat{b}[\sigma(\mathsf{B})]}\rangle}\subseteq Gr(\widehat{\HH})$. 
\end{theorem}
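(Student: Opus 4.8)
The plan is to extract the claim from the large diagram preceding the statement, so that the proof reduces to one application of the defining property of the Hopf image followed by a diagram chase among commutative algebras. I will work with the commutative Woronowicz algebras $C(Gr(\widehat\GG))$, $C(Gr(\widehat\HH))$ and $C(\sigma(\mathsf{B}))$, on which ``taking the Hopf image'' of a morphism out of $C(Gr(\widehat\GG))$ means nothing more than passing to the quotient dual to the closed subgroup of $Gr(\widehat\GG)$ generated by the image of the Gelfand-dual map; in particular the Hopf image of $b$ is exactly $\overline{\langle\hat b[\sigma(\mathsf{B})]\rangle}$.

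The single place the hypothesis is used is this: since $\HH$ is the Hopf image of $\beta$, there is a ${}^*$-homomorphism $\tilde\beta\colon C^u(\HH)\to\mathsf{B}$ with $\tilde\beta\circ\pi=\beta$. The three auxiliary maps $p$, $b$ and $\tilde b$ exist because their defining composites have commutative target and therefore factor through the relevant quotients by commutator ideals, as recalled just before the statement; they satisfy $p\circ q_\GG=q_\HH\circ\pi$, $b\circ q_\GG=q_{\mathsf{B}}\circ\beta$ and $\tilde b\circ q_\HH=q_{\mathsf{B}}\circ\tilde\beta$. The key step is then to verify the factorization $\tilde b\circ p=b$. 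As $q_\GG$ is surjective it is enough to check it after precomposition with $q_\GG$, where
\[\tilde b\circ p\circ q_\GG=\tilde b\circ q_\HH\circ\pi=q_{\mathsf{B}}\circ\tilde\beta\circ\pi=q_{\mathsf{B}}\circ\beta=b\circ q_\GG;\]
cancelling the epimorphism $q_\GG$ gives $\tilde b\circ p=b$.

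It remains to dualize. The map $p$ is surjective, since $p\circ q_\GG=q_\HH\circ\pi$ is a composite of surjections, so its Gelfand dual $\hat p$ embeds $Gr(\widehat\HH)$ as a closed subgroup of $Gr(\widehat\GG)$. Applying Gelfand duality to $\tilde b\circ p=b$ yields $\hat b=\hat p\circ\hat{\tilde b}$, and therefore
\[\hat b[\sigma(\mathsf{B})]=\hat p\big[\hat{\tilde b}[\sigma(\mathsf{B})]\big]\subseteq\hat p[Gr(\widehat\HH)]=Gr(\widehat\HH).\]
Since $Gr(\widehat\HH)$ is a closed subgroup of $Gr(\widehat\GG)$, the subgroup it generates is contained in it, giving $\overline{\langle\hat b[\sigma(\mathsf{B})]\rangle}\subseteq Gr(\widehat\HH)$, which is the reformulated assertion of the theorem.

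I do not expect a genuine obstacle here; the only real care is bookkeeping — tracking which factorization map is which and applying the contravariance of Gelfand duality consistently, so that the inclusion lands on the correct side. It is worth noting that the inclusion itself holds for \emph{any} closed quantum subgroup through which $\beta$ factors, by the same chase; the assumption that $\HH$ is the Hopf image is what makes $Gr(\widehat\HH)$ minimal and hence the bound $\overline{\langle\hat b[\sigma(\mathsf{B})]\rangle}\subseteq Gr(\widehat\HH)$ the sharpest one available.
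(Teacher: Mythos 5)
Your proof is correct and follows exactly the route of the paper: the paper's own argument is the one-line observation that commutativity of the diagram forces $\sigma(\mathsf{B})\subseteq Gr(\widehat{\HH})$, whence the generated closed subgroup is contained in $Gr(\widehat{\HH})$ since the latter is closed. You have simply written out the diagram chase ($\tilde b\circ p=b$ via cancelling the epimorphism $q_\GG$) and the dualization that the paper leaves implicit.
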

With slight abuse of notation, this should be understood as $\overline{\langle\sigma(\mathsf{B})\rangle}\subset\HH\subset\GG$, where  $\HH$ is the Hopf image of $\beta$. As a general motto, this means that the farther from being simple the \CA\ $\mathsf{B}$ is, the better lower bound on $\HH$ we obtain.
\begin{proof}
 That $\sigma(\mathsf{B})\subseteq Gr(\widehat{\HH})$ follows from the commutativity of the above diagram and hence $\overline{\langle \sigma(\mathsf{B})\rangle}\subseteq Gr(\widehat{\HH})$, as the latter is closed subgroup of $Gr(\widehat{\GG})$.
\end{proof}
It is clear that the inclusion of \autoref{thm:criterion} can be proper, as the example of $\Lambda_{\widehat{\mathbb{F}_2}}\colon C^*(\mathbb{F}_2)\to C^*_r(\mathbb{F}_2)$ shows. Moreover, the result is not formulated in the optimal way, as one could replace $\mathsf{B}$ with the image of $\beta$ (cf. \cite[Section 2.2]{JKS16}), and the smaller subalgebra is more likely to have characters, as the example $\underline{a}\mapsto\mathrm{diag}(\underline{a})\colon c_0\hookrightarrow\mathsf{K}(\ell^2)$ shows. We will later see that even restricting the attention to the best-behaved case of coamenable compact quantum groups of Kac type, with finitely many characters, and the generating set coming from two proper subgroups (in the spirit of \cite{BCV}), the inclusion cannot be reversed.

\subsection{The quantum permutation group \texorpdfstring{$S_n^+$}{Sn+} and quantum increasing sequences}\label{sec:qpg-qis}
Quantum permutation groups $S_n^+$ were introduced in \cite{Wang98} (cf. \cite[Section 3]{SS16}). Consider the universal $C^*$-algebra generated by $n^2$-elements $u_{i,j}$, $1\leq i,j\leq n$ subject to the following relations:
\begin{enumerate}
 \item the generators $u_{i,j}$ are all projections.
 \item $\sum_{i=1}^nu_{i,j}=\mathds{1}=\sum_{j=1}^nu_{i,j}$.
\end{enumerate}
This $C^*$-algebra will be denoted $C^u(S_n^+)$. The matrix $U=[u_{i,j}]_{1\leq i,j\leq n}$ is a fundamental corepresentation of $C^u(S_n^+)$, this gives all the quantum group-theoretic data. Moreover, $S_n^+=S_n$ for $n\leq 3$ and $S_n^+\supsetneq S_n$ for $n\geq4$ and $S_n^+$ is coamenable only if $n\leq4$ (\cite{Ban99}).

The algebra of continuous functions on the set of quantum increasing sequences was defined by Curran in \cite[Definition 2.1]{Cur11}. Let $k\leq n\in\mathbb{N}$ and let $C(I^+_{k,n})$ be the universal $C^{\ast}$-algebra generated by $p_{i,j}$, $1\leq i\leq n$, $1\leq j\leq k$ subject to the following relations:
\begin{enumerate}
 \item the generators $p_{i,j}$ are all projections.
 \item each column of the rectangular matrix $P=[p_{i,j}]$ forms a partition of unity: \(\sum_{i=1}^n p_{i,j}=\mathds{1}\) for each \(1\leq j\leq k\).
 \item increasing sequence condition: \(p_{i,j}p_{i'j'}=0\) whenever $j<j'$ and $i\geq i'$.
\end{enumerate}
This definition is obtained by the liberalization philosophy (see \cite{BS09}): if one denotes by $I_{k,n}$ the set of increasing sequences of length $k$ and values in $\{1,\ldots,n\}$, then it is possible to write a matrix representation: to an increasing sequence $\underline{i}=(i_1<\ldots<i_k)$ one associates its matrix representation $A(\underline{i})\in M_{n\times k}(\{0,1\})$ as follows: $A(\underline{i})_{i_l,l}=1$ and all other entries are set to be $0$. One can check that the space of continuous functions on these matrices $C(\{A(\underline{i}):\underline{i}\in I_{k,n}\})$ is generated by the coordinate functions $x_{i,j}$ subject to the relations introduced above \textbf{and} the commutation relation (cf. the discussion after \cite[Remark 2.2]{Cur11}).

Curran defined also a ${}^{\ast}$-homomorphism $\beta_{k,n}\colon C(S_n^+)\to C(I^+_{k,n})$ (\cite[Proposition 2.5]{Cur11}) by:
\begin{itemize}
 \item $u_{i,j}\mapsto p_{i,j}$ for $1\leq i \leq n$, $1\leq j\leq k$,
 \item $u_{i,k+m}\mapsto 0$ for $1\leq m\leq n-k$ and $i<m$ or $i>m+k$,
 \item for $1\leq m\leq n-k$ and $0\leq p\leq k$, \[ u_{m+p,k+m}\mapsto \sum_{i=0}^{m+p-1} p_{i,p}-p_{i+1,p+1},\]
 where we set $p_{0,0}=\mathds{1}$, $p_{0,i}=p_{0,i}=p_{i,k+1}=0$ for $i\geq1$.
\end{itemize}
This ${}^{\ast}$-homomorphism is well defined thanks to \cite[Proposition 2.4]{Cur11}, where some new relations were identified, and the universal property of $C(S_n^+)$. $\beta_{k,n}$ are defined in such a way that when applied to the commutative $C^*$-algebras $C(S_n)\to C(I_{k,n})$ (which satisfy the same relations plus commutativity), it is precisely the ``completing an increasing sequence to a permutation'' map. More precisely, one draws the diagram of an increasing sequence $\underline{i}=(i_1<\ldots<i_k)$ in the following way: drawing $k$ dots in one row and additional $n$ dots in the row below, one connects $l$-th dot in the upper row to the $i_l$-th dot in the lower row. Then one draws additional $n-k$ dots in the upper row next to previously drawn $k$ dots and connects them as follows: $(k+j)$-th dot is connected to the $j$-th leftmost non-connected dot in the bottom row. Finally, one obtains the diagram of a permutation on $n$ letters, which is then called $\beta_{k,n}(\underline{i})$ (for the version of $\beta_{k,n}$ as a map between appropriate commutative $C^*$-algebras). 
\begin{fact}\label{fact}
$\langle I_{k,n}\rangle=S_n$ for all $n$ and all $k\neq0,n$, where $I_{k,n}\subseteq S_n$ is seen via the above map.  
\end{fact}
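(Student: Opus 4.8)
The plan is to translate the combinatorics into an explicit statement about permutations and then reduce to standard facts on generation of symmetric groups. Reading off the diagram, $\beta_{k,n}(\underline i)$ is, up to the orientation convention, exactly the $(k,n-k)$-shuffle $\sigma_{\underline i}\in S_n$ determined by $\sigma_{\underline i}(l)=i_l$ for $1\le l\le k$ together with the requirement that $\sigma_{\underline i}$ be increasing on $\{k+1,\dots,n\}$; the opposite convention merely replaces each element by its inverse, and since a group is closed under inversion this does not change the generated subgroup. Thus it suffices to show that the $(k,n-k)$-shuffles generate $S_n$ whenever $1\le k\le n-1$. I would do this not by using all of them, but by singling out three easily computed families and feeding them into classical generation arguments.

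First I would produce the full symmetric group on the initial block. The sequence $\underline i=(2,3,\dots,k+1)$ has one-line form $[2,3,\dots,k+1,1,k+2,\dots,n]$, i.e.\ the $(k+1)$-cycle $c=(1\,2\,\cdots\,k{+}1)$ fixing $k+2,\dots,n$; the sequence $\underline i=(1,2,\dots,k-1,k+1)$ has one-line form $[1,\dots,k-1,k+1,k,k+2,\dots,n]$, i.e.\ the adjacent transposition $s_k=(k\ k{+}1)$. Since $s_k$ transposes two cyclically consecutive points of $c$, the pair $\{c,s_k\}$ generates the symmetric group on $\{1,\dots,k+1\}$, so $S_{\{1,\dots,k+1\}}\subseteq G:=\langle I_{k,n}\rangle$.

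Next I would extend to all of $S_n$ by induction on $m$. For each $m$ with $k+1\le m\le n-1$ the sequence $\underline i=(1,2,\dots,k-1,m+1)$ gives $[1,\dots,k-1,m+1,k,k+1,\dots,m,m+2,\dots,n]$, namely the cycle $\gamma_m=(k\ m{+}1\ m\ m{-}1\ \cdots\ k{+}1)$, whose support lies in $\{1,\dots,m+1\}$ and which sends $m+1\mapsto m$. Assuming $S_{\{1,\dots,m\}}\subseteq G$, the subgroup $\langle S_{\{1,\dots,m\}},\gamma_m\rangle$ sits inside $S_{\{1,\dots,m+1\}}$, acts transitively on $\{1,\dots,m+1\}$ (because $\gamma_m$ links $m+1$ to $m$), and has a point-stabilizer of $m+1$ containing $S_{\{1,\dots,m\}}$; by the orbit–stabilizer count its order is at least $(m+1)\cdot m!=(m+1)!$, forcing equality with $S_{\{1,\dots,m+1\}}$. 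Taking the base case $m=k+1$ from the previous paragraph and iterating up to $m=n-1$ yields $S_{\{1,\dots,n\}}=S_n\subseteq G$, hence $G=S_n$.

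The one genuinely error-prone step is the combinatorial bookkeeping: once the three one-line forms above are read off the diagram correctly, everything else is routine group theory (a full cycle together with a transposition of adjacent-in-cycle points generates the symmetric group, and the transitivity-plus-stabilizer argument for the inductive extension). The hypotheses $k\neq 0,n$ enter exactly to guarantee $1\le k\le n-1$, which is what makes the chosen sequences legitimate (the entries $k+1$ and, more generally, $m+1\le n$ exist); this is precisely what fails in the excluded degenerate cases, where $I_{k,n}$ reduces to a single point and generates only the trivial group.
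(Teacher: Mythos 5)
The paper does not actually prove this statement: it is recorded as a \emph{Fact} with no accompanying proof, being treated as the well-known combinatorial core of Ryll-Nardzewski's theorem (that the shuffles obtained by completing increasing sequences generate $S_n$). Your argument is therefore necessarily a different route from the paper's --- it supplies a proof where the paper supplies none --- and it is correct and complete. The identification of $\beta_{k,n}(\underline i)$ with the $(k,n-k)$-shuffle $\sigma_{\underline i}(l)=i_l$ (increasing on the last $n-k$ positions) matches the diagrammatic description in the text, and your remark that the opposite reading of the diagram only inverts each generator, hence does not change $\langle I_{k,n}\rangle$, disposes of the convention issue cleanly. The three one-line forms check out: $(2,\dots,k+1)$ gives the $(k+1)$-cycle $c$, $(1,\dots,k-1,k+1)$ gives $(k\ k{+}1)$ (including the degenerate reading when $k=1$, where both coincide with $(1\,2)$), and $(1,\dots,k-1,m+1)$ gives the cycle $\gamma_m$ as claimed; the cited generation facts (an $m$-cycle plus a transposition of cyclically adjacent points generates $S_m$; the transitivity-plus-stabilizer count forces $\langle S_{\{1,\dots,m\}},\gamma_m\rangle=S_{\{1,\dots,m+1\}}$) are standard and correctly applied, and the hypotheses $k\neq 0,n$ are used exactly where needed. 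If anything, your proof is more informative than what the paper offers, since it exhibits an explicit small generating subfamily of $I_{k,n}$ rather than appealing to folklore; it could be inserted as the missing proof of the Fact essentially verbatim.
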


\begin{proposition}\label{prop:criterionapplied}
 Let $\mathbb{H}\subseteq S_n^+$ be the Hopf image of the map $\beta_{k,n}\colon C(S_4^+)\to C(I^+_{k,n})$ for $k\neq0,n$. Then $S_n\subseteq \mathbb{H}\subseteq S_n^+$
\end{proposition}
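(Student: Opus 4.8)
The plan is to apply the lower-bound criterion of \autoref{thm:criterion} directly to the map $\beta=\beta_{k,n}$, with $\GG=S_n^+$ and $\mathsf{B}=C(I^+_{k,n})$, and then feed in the classical \autoref{fact}. The right-hand inclusion $\HH\subseteq S_n^+$ is automatic: by construction the Hopf image $\HH$ is identified with a quotient Woronowicz algebra $C^u(\HH)$ of $C^u(S_n^+)$ via the Hopf ${}^*$-homomorphism $\pi$, i.e.\ $\HH$ is a closed quantum subgroup of $S_n^+$. So the whole content lies in the left-hand inclusion $S_n\subseteq\HH$.

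First I would identify the two abelianizations appearing in the diagram preceding the criterion. On the source side, $Gr(\widehat{S_n^+})$ is the spectrum of the quotient of $C^u(S_n^+)$ by its commutator ideal; imposing commutativity on the magic-unitary relations forces the entries $u_{i,j}$ to be commuting projections summing to $\mathds{1}$ along each row and column, i.e.\ a $0/1$ permutation matrix, so $Gr(\widehat{S_n^+})=S_n$. On the target side, $\sigma(\mathsf{B})$ is the spectrum of the abelianization of $C(I^+_{k,n})$; adjoining the commutation relation to the defining relations of $C(I^+_{k,n})$ yields exactly the coordinate ring of the classical increasing sequences, so $\sigma(\mathsf{B})=I_{k,n}$, as recalled in the discussion after the definition of $C(I^+_{k,n})$.

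Next I would verify that the map $b\colon C(S_n)\to C(I_{k,n})$ completing the factorization of $q_{\mathsf{B}}\comp\beta_{k,n}$ through $q_{S_n^+}$ is precisely the abelianized Curran map, so that its Gelfand dual $\hat b\colon I_{k,n}\to S_n$ is the classical ``complete the increasing sequence to a permutation'' map described in \autoref{sec:qpg-qis}. This is a bookkeeping check: one pushes the explicit formulas defining $\beta_{k,n}$ on the generators $u_{i,j}$ into the commutative quotients and confirms they reproduce the combinatorial completion diagram. With this identification, $\hat b[\sigma(\mathsf{B})]$ is exactly the subset $I_{k,n}\subseteq S_n$ of \autoref{fact}.

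Finally I would invoke \autoref{thm:criterion}, which gives $\overline{\langle \hat b[\sigma(\mathsf{B})]\rangle}\subseteq Gr(\widehat{\HH})\subseteq\HH$. Combining with the previous step, the left-hand side equals $\overline{\langle I_{k,n}\rangle}$, and by \autoref{fact} this is $S_n$ for every $k\neq 0,n$. Hence $S_n\subseteq\HH$, which together with $\HH\subseteq S_n^+$ finishes the argument. I expect the only genuinely substantive step to be the middle one---confirming that the abelianization of Curran's $\beta_{k,n}$ really is the classical completion map, and in particular that its image is all of $I_{k,n}$ rather than something with smaller span---but this is forced by the explicit formulas together with the universal properties of $C(S_n^+)$ and $C(I^+_{k,n})$.
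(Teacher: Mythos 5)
Your proposal is correct and follows exactly the paper's route: the paper's proof likewise identifies the abelianization of $C(I^+_{k,n})$ with $C(I_{k,n})$, notes that $\beta_{k,n}$ descends to the canonical completion map on abelianizations, and concludes by \autoref{thm:criterion} together with \autoref{fact}. You merely spell out in more detail the bookkeeping that the paper compresses into one sentence.
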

\begin{proof}
 The abelianization of $C(I^+_{k,n})$ is $C(I_{k,n})$ and the map $\beta_{k,n}$ on the level of abelianizations is the canonical map, as noted above. We conclude by \autoref{thm:criterion} together with \autoref{fact}.
\end{proof}

In what follows, we restrict our attention to the case $n=4$, $k=2$.
\begin{theorem}\label{thm:hopfimage}
 The Hopf image of the map $\beta_{2,4}\colon C(S_4^+)\to C(I^+_{2,4})$ is the whole $S_4^+$.
\end{theorem}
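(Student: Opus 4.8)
The plan is to compute the Hopf image $\HH$ through its Tannaka--Krein data and show it has no more intertwiners than $S_4^+$ itself. By the Hopf image theory of Banica--Bichon \cite{BB10}, $\HH$ is completely determined by the intertwiner spaces of the model matrix $P:=(\id\otimes\beta_{2,4})(U)\in M_4(C(I^+_{2,4}))$, namely $\Mor_\HH(U^{\otimes k},U^{\otimes l})=\{T:TP^{(k)}=P^{(l)}T\}$, where $P^{(k)}$ is the matrix of the $k$-fold tensor power corepresentation. Since $P$ is a magic unitary, all non-crossing (Temperley--Lieb) partition maps intertwine it, so $\Mor_{S_4^+}=NC\subseteq\Mor_\HH$; and the inclusion $S_4\subseteq\HH$ from \autoref{prop:criterionapplied} forces $\Mor_\HH\subseteq\Mor_{S_4}=\spn P$, the span of all partition maps. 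Thus $\Mor_\HH$ is squeezed between the categories $NC$ and $P$, and the theorem reduces to showing that no \emph{crossing} intertwiner survives.

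The decisive point is the level $(2,2)$, where the only crossing partition is the flip $\flip$ on $\mathbb{C}^4\otimes\mathbb{C}^4$. A direct computation gives $(\flip P^{(2)})_{ij,kl}=P_{jk}P_{il}$ and $(P^{(2)}\flip)_{ij,kl}=P_{il}P_{jk}$, so $\flip\in\Mor_\HH(U^{\otimes2},U^{\otimes2})$ if and only if all entries of $P$ commute, i.e.\ if and only if $C(I^+_{2,4})$ is commutative. I would rule this out by exhibiting an explicit non-commutative $*$-representation: the relations force $p_{4,1}=p_{1,2}=0$, and on $\mathbb{C}^3$ one may take $p_{\bullet,1}$ to be the standard rank-one projections onto $\mathbb{C}e_1,\mathbb{C}e_2,\mathbb{C}e_3$ and $p_{3,2}$ the projection onto the line $\mathbb{C}(e_1+e_2)$ (whence $p_{2,2}=0$ and $p_{4,2}=1-p_{3,2}$). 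A short check confirms every defining relation of $C(I^+_{2,4})$, while $p_{1,1}$ and $p_{3,2}$ are rank-one projections onto distinct non-orthogonal lines and hence fail to commute. Therefore $\flip$ is not an intertwiner of the model, and in particular $\HH\neq S_4$.

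To propagate this from level $(2,2)$ to all levels I would invoke the structure of the symmetric series: the only categories of partitions lying between $NC$ and $P$ are $NC$ and $P$ themselves, since any crossing partition reproduces the basic crossing at level $(2,2)$ after suitable cappings and rotations. Equivalently, $S_4$ is a maximal closed quantum subgroup of $S_4^+$, which can be read off the classification of the quantum subgroups of $S_4^+$ in the Banica--Bichon Crelle paper \cite{BB09}. Combined with $S_4\subseteq\HH\subseteq S_4^+$ and $\HH\neq S_4$, this forces $\Mor_\HH=NC=\Mor_{S_4^+}$, and hence $\HH=S_4^+$.

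The main obstacle is precisely this last passage. A priori $\Mor_\HH(k,l)$ is only a \emph{subspace} of $\spn P(k,l)$ and need not be spanned by partition maps (``easiness''), and this is a genuine subtlety at $n=4$, where the partition maps become linearly dependent at high levels, so one cannot simply count dimensions level by level. The cleanest way around it is the maximality/partition-lattice input above, which guarantees that the single exclusion of $\flip$ at level $(2,2)$ already forbids every crossing intertwiner; the crux of the argument is thus to verify that this classification genuinely applies to $\HH$, rather than to re-derive all higher intertwiner spaces of the model by hand.
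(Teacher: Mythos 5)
Your proof is correct, and underneath the Tannaka--Krein packaging its logical skeleton coincides with the paper's: the sandwich $S_4\subseteq\HH\subseteq S_4^+$ from \autoref{prop:criterionapplied}, the fact that $\HH\neq S_4$ because $C(I^+_{2,4})$ is not commutative, and the Banica--Bichon classification of quantum subgroups of $S_4^+$ to exclude anything strictly between $S_4$ and $S_4^+$. Two points of genuine difference are worth recording. First, where the paper rules out $\HH=S_4$ by citing $C(I^+_{2,4})\cong(\mathbb{C}^2\ast\mathbb{C}^2)\oplus\mathbb{C}^2$ from \cite{SS16} (infinite-dimensionality), you give a self-contained and more elementary argument via an explicit noncommutative representation; this checks out: the increasing-sequence relations indeed force $p_{4,1}=p_{1,2}=0$, the only surviving constraints are $p_{2,1}p_{2,2}=p_{3,1}p_{2,2}=p_{3,1}p_{3,2}=0$, and your choice (with $p_{2,2}=0$ and $p_{3,2}$ the projection onto $\mathbb{C}(e_1+e_2)$, which is orthogonal to $e_3$) satisfies them while $p_{1,1}$ and $p_{3,2}$ visibly fail to commute. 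Second, the detour through the flip at level $(2,2)$ is correct but superfluous: since $\beta_{2,4}$ is surjective, $\HH=S_4$ would make $C(I^+_{2,4})$ a quotient of the commutative algebra $C(S_4)$, so noncommutativity alone finishes that step without any intertwiner computation. You are right to flag that partition combinatorics cannot close the argument because $\HH$ need not be easy; your fallback --- maximality of $S_4$ in $S_4^+$ --- is exactly what the paper extracts from \cite[Theorem 6.1]{BB09}, phrased there as the statement that the only subgroups on that list whose group of characters is all of $S_4$ are $S_4$ and $S_4^+$ itself.
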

\begin{proof}
 Form \autoref{prop:criterionapplied} we see that the group of characters of $\mathbb{H}$, the Hopf image of $\beta$, is the permutation group $Gr(\widehat{\mathbb{H}})=S_4$. In particular, $\mathbb{H}$ contains the diagonal Klein subgroup, so is one of the groups listed in \cite[Theorem 6.1]{BB09}. It is easy to check that the group of characters of subgroups contained in \cite[Theorem 6.1]{BB09} are equal to $S_4$ only for the following two groups: $S_4$ and $S_4^+$. On the other hand, in \cite[Proposition 7.4]{SS16} it was shown that $C(I^+_{2,4})\cong (\mathbb{C}^2\ast\mathbb{C}^2)\oplus\mathbb{C}^2$ (the free product is amalgamated over $\mathbb{C}\mathds{1}$) is infinite dimensional, hence $\mathbb{H}\neq S_4$. Consequently, $\mathbb{H}=S_4^+$ is the only possibility left.
\end{proof}

\section{Group-theoretical properties of $SO_{-1}(3)$}\label{sec:gtprop}
\subsection{The \CA s $C(SO_{-1}(3))$, $C(SO(3))$ and $C(O_{-1}(2))$.}\label{sec:objects}
Let us now introduce the main players of this manuscript.
\begin{df}\label{def:SO_(-1)(3)}
The \CA\ of continuous functions on a compact quantum group $SO_{-1}(3)$ is the universal \CA\ generated by $a_{i,j}$, $1\leq i,j\leq 3$ subject to the following relations:
\begin{enumerate}
 \item The matrix $A=(a_{i,j})_{1\leq i,j\leq 3}\in M_3(C(SO_{-1}(3))$ is orthogonal, i.e.~$AA^{\top}=A^{\top}A=\mathds{1}\in M_3(C(SO_{-1}(3)))$. In particular, the generators $a_{i,j}$ are self-adjoint.
 \item $a_{i,j}a_{i,k}=-a_{i,k}a_{i,j}$ for $k\neq j$.
 \item $a_{i,j}a_{k,j}=-a_{k,j}a_{i,j}$ for $k\neq i$. 
 \item $a_{i,j}a_{k,l}=a_{k,l}a_{i,j}$ for $i\neq k$, $j\neq l$.
 \item $\sum_{\sigma\in S_3} a_{1,\sigma(1)}a_{2,\sigma(2)}a_{3,\sigma(3)}=\mathds{1}$
\end{enumerate}
$A$ is the fundamental corepresentation of $C(SO_{-1}(3))$: this defines the quantum group structure.
\end{df}

In the same spirit we can define the \CA\ of continuous functions on $SO(3)$ as the universal \CA\ generated by $x_{i,j}$, $1\leq i,j\leq 3$ subject to the following relations:
\begin{enumerate}
 \item The matrix $X=(x_{i,j})_{1\leq i,j\leq 3}\in M_3(C(SO(3))$ is orthogonal, i.e.~$AA^{\top}=A^{\top}A=\mathds{1}\in M_3(C(SO(3)))$. In particular, the generators $x_{i,j}$ are self-adjoint.
 \item $x_{i,j}x_{k,l}=x_{k,l}x_{i,j}$ for all $1\leq i,j,k,l\leq3$.
 \item $\sum_{\sigma\in S_3} \sgn(\sigma)a_{1,\sigma(1)}a_{2,\sigma(2)}a_{3,\sigma(3)}=\mathds{1}$
\end{enumerate}
It is routine to conclude from Stone-Weierstrass theorem that $C(SO(3))$ is indeed the \CA\ of continuous functions on $SO(3)$. The matrix multiplication in $SO(3)$ is encoded by $X$ being a fundamental corepresentation.

We will also need the \CA\ of continuous functions on a compact quantum group $O_{-1}(2)$. 

\begin{df}\label{def:O_(-1)(2)}
The \CA\ of continuous functions on a compact quantum group $O_{-1}(2)$ is the universal \CA\ generated by $\tilde{a}_{i,j}$, $1\leq i,j\leq 2$ subject to the relations (1-4) of \autoref{def:SO_(-1)(3)}, \emph{mutati mutandis}. As previously, the matrix $\tilde{A}=[\tilde{a}_{i,j}]_{1\leq i,j\leq2}\in M_2(C(O_{-1}(2)))$ is a fundamental corepresentation of $C(O_{-1}(2)))$. 
\end{df}

The following map yields a surjective ${}^{\ast}$-homomorphism interpreted as $O_{-1}(2)\subset SO_{-1}(3)$:
\begin{equation}\label{eq:canonicalembedding} a_{i,j}\mapsto
\left\{
\begin{array}{ll} 
\tilde{a}_{i,j} & \mathrm{ for\ } 1\leq i,j\leq2\\
\tilde{a}_{1,1}\tilde{a}_{2,2}+\tilde{a}_{1,2}\tilde{a}_{2,1} &  \mathrm{ for\ } i=j=3\\
0 & \mathrm{otherwise}
\end{array}\right.\colon C(SO_{-1}(3))\twoheadrightarrow C(O_{-1}(2))
\end{equation}
But there are more embeddings $O_{-1}(2)\subset SO_{-1}(3)$. In order to classify all of them, let us recall that these quantum groups can be described as cocycle-twists of their classical versions. Let us remark that (as will later become clear) these quantum groups are coamenable, thus \eqref{eq:canonicalembedding} gives the proper description of the notion of subgroup.

\subsection{Twistings. General Theory.}\label{sec:twistingsgeneral}  In what follows, we briefly discuss the twisting procedure and introduce the notation. We stick to the theory of \HA s, although the procedure works well for general Hopf algebras over any field.

Let $H$ be a \HA\ with coproduct $\Delta$. Recall that the algebra $H\otimes H$ can be given the \HA\ structure: the coproduct is $\Delta_2=(\id\otimes \Sigma\otimes \id)\comp(\Delta\otimes\Delta)$, where $\Sigma$ denotes the flip map. We will use the Sweedler-Heyneman notation: $\Delta(x)=x_{(1)}\otimes x_{(2)}$. A linear map $\sigma\colon H\otimes H\to\mathbb{C}$ is called a 2-cocycle if:
\begin{enumerate}
 \item it is convolution invertible: the neutral element of convolution is $m_{\mathbb{C}}\comp(\varepsilon\otimes\varepsilon)$, the convolution of $\sigma, \sigma'\colon H\otimes H\to\mathbb{C}$ is given by $\sigma\ast\sigma'=m_{\mathbb{C}}\comp(\sigma\otimes\sigma')\comp\Delta_2$,
 \item it satisfies the cocycle identity: \begin{equation}\label{eq:cocycle}\sigma(x_{(1)},y_{(1)})\sigma(x_{(2)}y_{(2)},z)=\sigma(y_{(1)},z_{(1)})\sigma(x,y_{(2)}z_{(2)})\end{equation} and $\sigma(x,1)=\varepsilon(x)=\sigma(1,x)$ for $x,y,z\in H$.
\end{enumerate}
Here and in what follows, $m_W\colon W\otimes W\to W$, for a given algebra $W$, is the multiplication map $W\otimes W\ni x\otimes y\xmapsto{m_W} x\cdot y\in W$.

Following \cite{Doi93, Sch96, BB09}, a 2-cocycle $\sigma$ provides a new \HA\ $H^{\sigma}$. As a coalgebra, $H^{\sigma}=H$, whereas the product of $H^{\sigma}$ is defined as

\[[x][y]=\sigma(x_{(1)},y_{(1)})\sigma^{-1}(x_{(3)},y_{(3)})[x_{(2)}y_{(2)}],\]
where an element $x\in H$ is denoted $[x]$ when viewed as an element of $H^{\sigma}$. In other words, $m_{H^{\sigma}}=(\sigma\otimes m_H\otimes\sigma^{-1})\comp\Delta_2^2$. The antipode of $H^{\sigma}$ can be expressed via the following formula:
\[S^{\sigma}([x])=\sigma(x_{(1)},S(x_{(2)}))\Sigma^{-1}(S(x_{(4)}),x_{(5)})[S(x_{(3)})].\]

The Hopf algebras $H$ and $H^{\sigma}$ have equivalent tensor categories of comodules (\cite{Sch96}). In our considerations we are interested in the case when the 2-cocycle is induced from a \HA\ quotient (quantum subgroup). Let $\pi\colon H\to K$ be a Hopf surjection and let $\sigma\colon K\otimes K\to\mathbb{C}$ be a 2-cocycle on $K$. Then $\sigma_{\pi}=\sigma\comp(\pi\otimes\pi)\colon H\otimes H\to\mathbb{C}$ is a 2-cocycle.

\begin{proposition}[{\cite[Lemma 4.3]{BB09}}]\label{prop:bijection} Let $\pi\colon H\to K$ be a Hopf surjection and let $\sigma\colon K\otimes K\to\mathbb{C}$ be a 2-cocycle. Then there is a bijection between:
\begin{enumerate}
 \item Hopf surjections $f\colon H\to L$ such that there exists a Hopf surjection $g\colon L\to K$ satisfying $g\comp f=\pi$, and
 \item Hopf surjections $\tilde{f}\colon H^{\sigma_\pi}\to \tilde{L}$ such that there exists a Hopf surjection $\tilde{g}\colon \tilde{L}\to K^{\sigma}$ satisfying $\tilde{g}\comp \tilde{f}=[\pi(\cdot)]$.
\end{enumerate}
The bijection is given by $\tilde{f}(\cdot)=[f(\cdot)]$.
\end{proposition}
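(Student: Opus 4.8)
The plan is to realize the correspondence concretely through the twisting operation, exploiting that the cocycle $\sigma$ on $K$ can be transported along any intermediate quotient. First I would record a structural simplification: since $f$ is surjective, the map $g$ in item (1) is \emph{uniquely} determined by the relation $g\comp f=\pi$ (it exists precisely when $\ker f\subseteq\ker\pi$), so the data in (1) amounts to a single Hopf surjection $f\colon H\to L$ through which $\pi$ factors, and symmetrically for (2). Given such an $f$ with its $g$, I set $\sigma_g:=\sigma\comp(g\otimes g)\colon L\otimes L\to\mathbb{C}$. As recalled just before the statement, $\sigma_g$ is again a 2-cocycle (the cited pullback property applied to $g$ in place of $\pi$); it is convolution invertible with $\sigma_g^{-1}=\sigma^{-1}\comp(g\otimes g)$ because $g$ is a bialgebra map; and from $\pi=g\comp f$ one gets the compatibilities $\sigma_\pi=\sigma_g\comp(f\otimes f)$ and $\sigma_\pi^{-1}=\sigma_g^{-1}\comp(f\otimes f)$.

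The forward map sends $f$ to $\tilde f:=[f(\cdot)]\colon H^{\sigma_\pi}\to L^{\sigma_g}=:\tilde L$. I would check this is a bialgebra homomorphism. It is automatically a coalgebra map, since $H^{\sigma_\pi}$ and $L^{\sigma_g}$ carry the same coproducts as $H$ and $L$, for which $f$ is already a coalgebra map. For multiplicativity, using that $f$ is an algebra map for the untwisted products together with the compatibilities above, one computes
\[
\begin{aligned}
\tilde f([x][y])
&=\sigma_\pi(x_{(1)},y_{(1)})\,\sigma_\pi^{-1}(x_{(3)},y_{(3)})\,[f(x_{(2)})f(y_{(2)})]\\
&=\sigma_g\bigl(f(x_{(1)}),f(y_{(1)})\bigr)\,\sigma_g^{-1}\bigl(f(x_{(3)}),f(y_{(3)})\bigr)\,[f(x_{(2)})f(y_{(2)})],
\end{aligned}
\]
which, after using that $f$ is a coalgebra map to rewrite $f(x)_{(i)}$ as $f(x_{(i)})$, is exactly $\tilde f([x])\,\tilde f([y])$ computed in $L^{\sigma_g}$. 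Since a bialgebra map between Hopf algebras automatically intertwines the antipodes, $\tilde f$ is a Hopf morphism, and it is surjective because its underlying linear map is that of $f$. The identical argument applied to $g$ and $\sigma_g=\sigma\comp(g\otimes g)$ shows $\tilde g:=[g(\cdot)]\colon L^{\sigma_g}\to K^{\sigma}$ is a Hopf surjection, and $\tilde g\comp\tilde f=[g(f(\cdot))]=[\pi(\cdot)]$, so the pair $(\tilde f,\tilde g)$ lies in item (2).

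For bijectivity I would exhibit the inverse as the same construction run with the inverse cocycle. The convolution inverse $\sigma^{-1}$ is a 2-cocycle on $K^{\sigma}$, and twisting back is canonically an identity: $(K^{\sigma})^{\sigma^{-1}}=K$, and likewise $(H^{\sigma_\pi})^{\sigma^{-1}_{[\pi]}}=H$, where $\sigma^{-1}_{[\pi]}=\sigma^{-1}\comp([\pi]\otimes[\pi])$. Writing $\Phi_\sigma$ for the assignment $f\mapsto[f(\cdot)]$ just constructed, the analogous assignment $\Phi_{\sigma^{-1}}$ carries the data in (2) back to data in (1). The crux is then to verify $\Phi_{\sigma^{-1}}\comp\Phi_\sigma=\id$ and $\Phi_\sigma\comp\Phi_{\sigma^{-1}}=\id$, which reduces to the fact that twisting a product by $\sigma$ and then by $\sigma^{-1}$ returns the original product, since $\sigma\ast\sigma^{-1}$ is the convolution unit and the bracketing scalars telescope. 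On underlying linear maps $\Phi_\sigma$ merely relabels $x$ as $[x]$, so the resulting bijection is precisely $\tilde f(\cdot)=[f(\cdot)]$.

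The main obstacle is not the multiplicativity computation, which is routine Sweedler bookkeeping, but making the \emph{reversibility} of twisting precise: one must pin down the canonical identification $(H^{\sigma_\pi})^{\sigma^{-1}_{[\pi]}}\cong H$ and confirm that it is natural enough for $\Phi_\sigma$ and $\Phi_{\sigma^{-1}}$ to compose to the identity on the nose rather than merely up to isomorphism, so that the explicit formula $\tilde f(\cdot)=[f(\cdot)]$ genuinely describes a mutually inverse pair. Everything else — that pullbacks of cocycles along bialgebra maps are cocycles, that convolution inverses pull back to convolution inverses, and that bialgebra maps between Hopf algebras preserve antipodes — is standard and feeds in directly.
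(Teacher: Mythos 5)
The paper gives no proof of this proposition — it is imported verbatim from \cite[Lemma 4.3]{BB09} — so there is no internal argument to compare against; your proof is correct and follows the standard route, which is also the one used in the cited source. Namely: pull $\sigma$ back along the (unique) $g$ to get $\sigma_g$ on $L$, set $\tilde L=L^{\sigma_g}$, verify multiplicativity of $[f(\cdot)]$ by the telescoping Sweedler computation using $\sigma_\pi=\sigma_g\comp(f\otimes f)$, and invert the construction by twisting with $\sigma^{-1}$. The single non-routine ingredient, which you correctly isolate but only assert, is that $\sigma^{-1}$ is a $2$-cocycle for the \emph{twisted} multiplication of $K^{\sigma}$ (so that $\Phi_{\sigma^{-1}}$ is defined at all) and that $(K^{\sigma})^{\sigma^{-1}}=K$; this is standard (it goes back to Doi), and once granted the two composites are the identity on the nose because the underlying linear maps are unchanged and the cocycle scalars cancel via $\sigma\ast\sigma^{-1}=m_{\mathbb{C}}\comp(\varepsilon\otimes\varepsilon)$.
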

\subsection{Characteristic subgroups.}\label{sec:characteristic} Let $\GG$ be a compact quantum group and let $\HH$ be its subgroup: let $\pi\colon C^u(\GG)\to C^u(\HH)$ be a quotient map intertwining the respective coproducts. 

\begin{df}
 We will say that $\HH$ is a \emph{characteristic subgroup} of $\GG$ if for any automorphism of $\GG$ (i.e.~a Hopf ${}^{\ast}$-homomorphism $\theta\colon C^u(\GG)\to C^u(\GG)$, cf. \cite[Section 3]{Pat13}), $\HH$ is mapped onto $\HH$ (i.e.~$\theta(\ker(\pi))=\ker(\pi)$, or in other words, there exists an automorphism $\chi\colon C^u(\HH)\to C^u(\HH)$ such that $\pi\comp\theta=\chi\comp\pi$).
\end{df}
Clearly, this notion can be described equivalently in terms of the underlying \HA, we will use this further without mentioning. An example of a characteristic subgroup is as follows.
\begin{proposition}
 The intrinsic subgroup $Gr(\widehat{\GG})$ of $\GG$ is characteristic.
\end{proposition}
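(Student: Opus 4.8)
The plan is to show that the intrinsic subgroup $Gr(\widehat{\GG})$ is preserved by every automorphism $\theta\colon C^u(\GG)\to C^u(\GG)$, and the key observation is that the commutator ideal $I\subset C^u(\GG)$ (the ideal generated by all $xy-yx$) is a purely algebraic/structural invariant of $C^u(\GG)$ that cannot fail to be preserved by any ${}^*$-homomorphism, let alone an automorphism. First I would recall that $Gr(\widehat{\GG})$ is defined via the quotient map $q_{\GG}\colon C^u(\GG)\to C^u(\GG)/I = C(Gr(\widehat{\GG}))$, so that $\ker(q_{\GG})=I$. The goal, by the \autoref{def:SO_(-1)(3)}-style formulation of characteristic subgroup, is precisely to produce an automorphism $\chi$ of $C(Gr(\widehat{\GG}))$ with $q_{\GG}\comp\theta=\chi\comp q_{\GG}$, and by the kernel description this amounts to showing $\theta(I)=I$.

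The heart of the argument is that $\theta$ being an algebra automorphism implies $\theta(I)=I$. Indeed, for any $x,y\in C^u(\GG)$ we have $\theta(xy-yx)=\theta(x)\theta(y)-\theta(y)\theta(x)$, which is again a commutator, so $\theta$ maps generators of $I$ to generators of $I$, giving $\theta(I)\subseteq I$; applying the same reasoning to $\theta^{-1}$ (which exists because $\theta$ is an automorphism) yields $\theta^{-1}(I)\subseteq I$, hence $I\subseteq\theta(I)$, and therefore $\theta(I)=I$. Since $\theta$ is a ${}^*$-automorphism this equality is honest (closure is respected because $\theta$ is isometric on the universal $C^*$-algebra, or simply because at the purely algebraic level of $\Pol(\GG)$ the commutator ideal is generated by the same elements). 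It is worth stressing that this step does not use that $\theta$ intertwines the coproduct: the commutator ideal is an invariant of the mere algebra structure. The coproduct compatibility of $\theta$ is only needed afterward to ensure that the induced map $\chi$ on the quotient is itself a Hopf $*$-automorphism rather than a bare algebra map.

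With $\theta(I)=I$ established, the passage to the quotient is automatic: $\theta$ descends to a well-defined $*$-automorphism $\chi\colon C(Gr(\widehat{\GG}))\to C(Gr(\widehat{\GG}))$ characterized by $\chi\comp q_{\GG}=q_{\GG}\comp\theta$, and $\chi$ is invertible with inverse induced by $\theta^{-1}$. Finally, because $\theta$ intertwines $\Delta_{\GG}$ and $q_{\GG}$ intertwines $\Delta_{\GG}$ with the coproduct on $C(Gr(\widehat{\GG}))$, a diagram chase shows $\chi$ intertwines the latter coproduct as well, so $\chi$ is a genuine automorphism of the compact quantum group $Gr(\widehat{\GG})$. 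This exhibits $Gr(\widehat{\GG})$ as characteristic.

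I do not expect a serious obstacle here; the statement is essentially the observation that commutator ideals are functorial under algebra isomorphisms. The only point requiring a modicum of care is verifying the well-definedness and coproduct-compatibility of $\chi$, i.e.\ the two small diagram chases, and ensuring that we genuinely use the invertibility of $\theta$ (not just that it is a homomorphism) to get equality $\theta(I)=I$ rather than mere inclusion—a plain Hopf surjection need not preserve $I$, which is exactly why the definition quantifies over automorphisms.
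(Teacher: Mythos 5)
Your argument is correct and is essentially the paper's own proof: both show $\theta(I)\subseteq I$ because $\theta$ sends commutators to commutators, and obtain the reverse inclusion by applying $\theta^{-1}$. The extra remarks on well-definedness and coproduct compatibility of the induced map $\chi$ are fine but do not change the substance.
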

\begin{proof}
 Let $\theta\colon C^u(\GG)\to C^u(\GG)$ be an automorphism of $\GG$. As the kernel of the quotient map $q\colon C^u(\GG)\to C(Gr(\widehat{\GG}))$ is an ideal generated by commutators, and as $\theta([x,y])=[\theta(x),\theta(y)]$, $\theta(\ker(q_{\GG}))\subseteq \ker(q_{\GG})$. The other inclusion follows by applying $\theta^{-1}$.
\end{proof}
There is another, more concrete, example of a characteristic subgroup, and we will use it in what follows. Let $H'=\Pol(SO_{-1}(3))$ denote the unique dense \HA\ of the quantum group $SO_{-1}(3)$, let $K=\mathbb{C}[\mathbb{Z}_2\times\mathbb{Z}_2]$ denote the group algebra of the Klein group $\mathbb{Z}_2\times\mathbb{Z}_2=\langle t_1,t_2|t_1^2=t_2^2=1, t_1t_2=t_2t_1\rangle$. Let us also denote $t_3=t_1t_2\in \mathbb{Z}_2\times\mathbb{Z}_2$ and the neutral element $t_0\in\mathbb{Z}_2\times\mathbb{Z}_2$. The Klein group can be embedded into $SO_{-1}(3)$ as follows:
\begin{equation}\label{eq:pid'}H'\ni a_{i,j}\xmapsto{\pi'_d}\delta_{i,j}t_i\in K\end{equation}
There are other occurrences of the Klein group as a subgroup of $SO_{-1}(3)$: this one will be called diagonal. Let $\pi\colon H'\to K$ be a Klein subgroup in $SO_{-1}(3)$ and consider the following factorization:

\begin{center}\begin{tikzpicture}
[bend angle=36,scale=2,auto,
pre/.style={<<-,shorten <=1pt,semithick},
post/.style={->>,shorten >=1pt,semithick}]
\node (H) at (-1.5,0) {$H'$};
\node (Hab) at (0,0) {$H'_{ab}$}
edge [pre] node[auto,swap] {$ab$} (H);
\node (Klein) at (1.5,0) {$K$}
edge [pre] node[auto,swap] {$\theta$} (Hab);
\draw[->] (H) .. controls (-0.5,0.5) and (0.5,0.5) .. node[midway] {$\pi$} (Klein) ;
 \end{tikzpicture}\end{center}
In the above diagram, $H'_{ab}$ denotes the the abelianization of $H'$: the \HA\ quotient of $H'$ by the commutator ideal, $ab$ denotes this quotient map. 

It is clear that all quotients $\pi$ onto the group algebra of the Klein group enjoy the above factorization. Let us describe it more explicitly.

\begin{lemma}\label{lem:maximalclassical}
 $H^{ab}$ is precisely the \HA\ $C(S_4)$, and the map $ab$ is given as follows: consider the canonical representation $\rho\colon S^4\to O(4)$ and consider the restriction to the subspace $(1,1,1,1)^{\perp}$: this gives an embedding $\rho\colon S_4\to O(3)$, $ab\colon H'\to C(S_4)$ acts as $a_{i,j}\xmapsto{ab} x_{i,j}\circ\rho$.
\end{lemma}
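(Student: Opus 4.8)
The plan is to compute the abelianization by passing to its characters, to recognize the resulting finite quantum group as a concrete group of signed permutation matrices, and then to match that group with the image of $\rho$, finally upgrading the set-theoretic identification to one of Hopf $*$-algebras.

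First I would abelianize the relations of \autoref{def:SO_(-1)(3)}. Imposing commutativity on relation (2) forces $a_{i,j}a_{i,k}=-a_{i,j}a_{i,k}$, hence $a_{i,j}a_{i,k}=0$ for $j\neq k$, and symmetrically $a_{i,j}a_{k,j}=0$ for $i\neq k$ from (3); relation (4) becomes vacuous. Together with the orthogonality relations $\sum_j a_{i,j}^2=\mathds{1}=\sum_i a_{i,j}^2$ from (1), multiplying $\sum_j a_{i,j}^2=\mathds{1}$ by $a_{i,k}$ yields $a_{i,k}^3=a_{i,k}$, so every self-adjoint generator of $H'_{ab}$ has spectrum in $\{-1,0,1\}$. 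Thus $H'_{ab}$ is a finitely generated commutative semisimple $*$-algebra, i.e.\ $H'_{ab}\cong C(\Omega)$ for a finite character space $\Omega$. Evaluating a character at $x_{i,j}:=\chi(a_{i,j})\in\mathbb{R}$, the relations $x_{i,j}x_{i,k}=0$ ($j\neq k$) together with $\sum_j x_{i,j}^2=1$ force exactly one nonzero entry, equal to $\pm1$, in each row, and likewise in each column. Hence $X=(x_{i,j})$ is a signed permutation matrix with underlying permutation $\tau$ and signs $\varepsilon_1,\varepsilon_2,\varepsilon_3$; in relation (5) only the summand $\sigma=\tau$ survives, so the relation reads $\varepsilon_1\varepsilon_2\varepsilon_3=1$.

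Next I would identify $\Omega$ abstractly. Since the map $B_3\to\{\pm1\}$ sending a signed permutation matrix to the product of its nonzero entries is a group homomorphism on the hyperoctahedral group $B_3=\{\pm1\}^3\rtimes S_3$, the set $\Omega$ of such matrices with $\varepsilon_1\varepsilon_2\varepsilon_3=1$ is a subgroup of $O(3)$ of index $2$, of order $24$. Writing $D=\{\varepsilon\in\{\pm1\}^3:\varepsilon_1\varepsilon_2\varepsilon_3=1\}\cong\mathbb{Z}_2\times\mathbb{Z}_2$, we get $\Omega=D\rtimes S_3$ where $S_3$ permutes the three nontrivial elements of $D$ faithfully and transitively; this is exactly the presentation $S_4\cong(\mathbb{Z}_2\times\mathbb{Z}_2)\rtimes S_3$, so $\Omega\cong S_4$. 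To see the isomorphism is implemented by $\rho$, I would fix the orthonormal basis $f_1=\tfrac12(e_1+e_2-e_3-e_4)$, $f_2=\tfrac12(e_1-e_2+e_3-e_4)$, $f_3=\tfrac12(e_1-e_2-e_3+e_4)$ of $(1,1,1,1)^\perp$ and compute $\rho(g)$ in this basis on a generating set of $S_4$: each transposition acts by a signed permutation matrix with product of signs $+1$ (e.g.\ $(12)$ fixes $f_1$ and sends $f_2\mapsto-f_3$, $f_3\mapsto-f_2$), so $\rho(S_4)\subseteq\Omega$, and equality follows since $\rho$ is faithful and both groups have order $24$.

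Finally I would promote this to a Hopf $*$-algebra statement. The map $ab$ is a Hopf $*$-algebra surjection, and the matrix comultiplication $\Delta(a_{i,j})=\sum_k a_{i,k}\otimes a_{k,j}$ descends to $H'_{ab}$; on the target, $x_{i,j}\circ\rho$ are the matrix coefficients of the representation $\rho$, so they satisfy $\Delta(x_{i,j}\circ\rho)=\sum_k(x_{i,k}\circ\rho)\otimes(x_{k,j}\circ\rho)$. Hence $a_{i,j}\mapsto x_{i,j}\circ\rho$ defines a Hopf $*$-algebra map onto $C(S_4)$, surjective because the coefficients of the faithful $\rho$ separate points, and the character count identifies it with $ab$. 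I expect the main obstacle to be precisely the last matching step rather than the algebra: it is easy to obtain an abstract isomorphism $\Omega\cong S_4$, but pinning down that $ab$ equals $a_{i,j}\mapsto x_{i,j}\circ\rho$ on the nose requires the correct choice of orthonormal basis of $(1,1,1,1)^\perp$ (the sign vectors $f_1,f_2,f_3$ above), so that $\rho(S_4)$ lands in signed permutation matrices with positive sign product and the indices $i,j$ of the two descriptions correspond.
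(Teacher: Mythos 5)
Your proposal is correct and is precisely the ``straightforward computation'' that the paper's proof leaves implicit: abelianizing relations (2)--(3) to get orthogonality of rows and columns with entries of spectrum $\{-1,0,1\}$, identifying the character space as the order-$24$ group of signed permutation matrices with product of nonzero entries $+1$ (the permanent relation (5)), and matching it with $\rho(S_4)$ in the sign-vector basis of $(1,1,1,1)^{\perp}$ --- which is also consistent with \autoref{rmk:rhoklein}. No substantive difference from the paper's (unstated) argument.
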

\begin{proof}
 Straightforward computation.
\end{proof}
Thus any Klein subgroup of $SO_{-1}(3)$ is a Klein subgroup in $S_4$; there are two types of Klein groups embedded into $S_4$: the easy ones, of the form: $\{\id, (1i), (kl), (1i)(kl)\}$ and the diagonal one, of the form $\{\id, (12)(34), (13)(24), (14)(23)\}$. 

\begin{remark}\label{rmk:rhoklein}
The diagonal Klein subgroup, in the above map, consists of the matrices \[\left\{\mathds{1}_{M_3}, \begin{pmatrix}-1 & 0 & 0\\ 0 & 1 & 0\\ 0 & 0 & -1\end{pmatrix}, \begin{pmatrix}1 & 0 & 0\\ 0 & -1 & 0\\ 0 & 0 & -1\end{pmatrix}, \begin{pmatrix}-1 & 0 & 0\\ 0 & -1 & 0\\ 0 & 0 & 1\end{pmatrix}\right\}.\] 
\end{remark}

\begin{lemma}\label{lem:characteristicklein} The diagonal Klein subgroup of $SO_{-1}(3)$ is a characteristic subgroup.\end{lemma}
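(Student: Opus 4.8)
The plan is to reduce the quantum statement to the corresponding classical fact about $S_4$, exploiting that the abelianization map $ab\colon H'\to C(S_4)$ of \autoref{lem:maximalclassical} has characteristic kernel. First I would observe that any Hopf ${}^{\ast}$-automorphism $\theta\colon H'\to H'$ preserves the commutator ideal $\ker(ab)$: indeed $\theta([x,y])=[\theta(x),\theta(y)]$, so $\theta(\ker(ab))\subseteq\ker(ab)$, and the reverse inclusion follows by applying $\theta^{-1}$, exactly as in the proof that $Gr(\widehat{\GG})$ is characteristic. Hence $\theta$ descends to a Hopf ${}^{\ast}$-automorphism $\theta_{ab}\colon C(S_4)\to C(S_4)$ satisfying $ab\circ\theta=\theta_{ab}\circ ab$. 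Since $C(S_4)$ is commutative, Gelfand--Naimark duality identifies $\theta_{ab}$ with (the pullback along) a genuine group automorphism $\phi$ of $S_4$.

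Next I would invoke the factorization recorded after \autoref{lem:maximalclassical}: the diagonal Klein embedding $\pi'_d\colon H'\to K$ factors as $H'\xrightarrow{ab}C(S_4)\xrightarrow{\bar\pi}K$, where $\bar\pi$ is dual to the inclusion of the diagonal Klein four-group $V=\{\id,(12)(34),(13)(24),(14)(23)\}$ into $S_4$ (this is the content of \autoref{rmk:rhoklein}). The key classical input is that $V$ is \emph{characteristic} in $S_4$: it is the unique normal subgroup of order $4$ (equivalently, it is the identity together with the single conjugacy class of elements of cycle type $(2,2)$), so every automorphism $\phi$ of $S_4$ satisfies $\phi(V)=V$. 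Dualizing, this gives $\theta_{ab}(\ker\bar\pi)=\ker\bar\pi$, since $\ker\bar\pi$ is precisely the ideal of functions vanishing on $V$ and $\theta_{ab}(f)=f\circ\phi^{-1}$ vanishes on $V$ iff $f$ vanishes on $\phi(V)=V$.

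Finally I would chase the diagram. Because $\ker\pi'_d=ab^{-1}(\ker\bar\pi)$ and $ab\circ\theta=\theta_{ab}\circ ab$, any $x\in\ker\pi'_d$ satisfies $ab(\theta(x))=\theta_{ab}(ab(x))\in\theta_{ab}(\ker\bar\pi)=\ker\bar\pi$, whence $\theta(x)\in\ker\pi'_d$; applying the same argument to $\theta^{-1}$ yields $\theta(\ker\pi'_d)=\ker\pi'_d$, which is exactly the definition of $\pi'_d$ cutting out a characteristic subgroup.

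I expect no serious obstacle: once \autoref{lem:maximalclassical} is in hand, the entire argument is carried by the two structural facts that the commutator (abelianization) ideal is automorphism-invariant and that $V$ is characteristic inside the classical group $S_4$. The only point requiring slight care is the passage through Gelfand duality --- namely confirming that an abstract Hopf ${}^{\ast}$-automorphism of the commutative algebra $C(S_4)$ really does correspond to a bona fide group automorphism of $S_4$, so that the classical characteristicity of $V$ may legitimately be applied.
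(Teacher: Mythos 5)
Your argument is correct and is essentially the paper's proof: reduce to the maximal classical subgroup $S_4$ via the (automorphism-invariant) abelianization, identify the diagonal Klein subgroup of $SO_{-1}(3)$ with the diagonal Klein four-group $V\subset S_4$, and invoke that $V$ is characteristic in $S_4$. You have merely spelled out the two assertions the paper dismisses as obvious, including the useful justification that $V$ is the unique normal subgroup of order $4$.
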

\begin{proof}
 Any automorphism of $SO_{-1}(3)$ restricts to an automorphism of $Gr(\widehat{SO_{-1}(3)})=S_4$ and any occurrence of a Klein subgroup in $SO_{-1}(3)$ appears as a Klein subgroup of $S_4$, so it suffices to check that the diagonal (in $SO_{-1}(3)$) Klein subgroup of $SO_{-1}(3)$ is precisely the diagonal (in $S_4$) Klein subgroup of $S_4$ and that the latter is characteristic in $S_4$. Both assertions are obvious.
\end{proof}
Just to complete the picture, let us elucidate the easy Klein subgroups, providing a non-example of a characteristic subgroup.
\begin{lemma}
 All the easy Klein subgroups of $S_4$ are conjugate; the corresponding automorphism of $S_4$ extends to $SO_{-1}(3)$.
\end{lemma}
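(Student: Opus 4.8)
The plan is to separate the two assertions and to obtain the second from the abstract behaviour of inner automorphisms, rather than from any hands-on relation-checking.

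For the conjugacy statement, I would first recall that the easy Klein subgroups of $S_4$ are exactly the three subgroups $\langle(ab),(cd)\rangle$ generated by a pair of disjoint transpositions, with $\{a,b,c,d\}=\{1,2,3,4\}$. Conjugation by $g\in S_4$ carries $\langle(ab),(cd)\rangle$ to $\langle(g(a)\,g(b)),(g(c)\,g(d))\rangle$, so $S_4$ permutes these three subgroups; since a $3$-cycle already cycles them, the action is transitive and all easy Klein subgroups are mutually conjugate. Fixing two of them, $K_1,K_2$, and an element $g\in S_4$ with $gK_1g^{-1}=K_2$, the ``corresponding automorphism'' is $c_g\colon h\mapsto ghg^{-1}$, which is by construction inner in $S_4$.

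To extend $c_g$ to $SO_{-1}(3)$, I would use that by \autoref{lem:maximalclassical} the intrinsic group is $Gr(\widehat{SO_{-1}(3)})=S_4$, so $g$ corresponds to a character $\chi_g\colon\Pol(SO_{-1}(3))\to\mathbb{C}$, given concretely on generators by $\chi_g(a_{i,j})=\rho(g)_{i,j}$, the matrix coefficient of $g$ in the representation $\rho$ of \autoref{lem:maximalclassical}. Being a character of the intrinsic group, $\chi_g$ is convolution-invertible with $\chi_g^{-1}=\chi_g\comp S=\chi_{g^{-1}}$, again a character. I would then define the inner automorphism implemented by this group-like element of the dual,
\[\theta_g(x)=\chi_g(x_{(1)})\,x_{(2)}\,\chi_g^{-1}(x_{(3)}),\]
and note that, because $\chi_g$ and $\chi_g^{-1}$ are characters and scalars are central, $\theta_g$ is automatically multiplicative; it is likewise a coalgebra map and $*$-preserving, hence a Hopf $*$-automorphism of $\Pol(SO_{-1}(3))$. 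On the fundamental matrix it acts by $A\mapsto\rho(g)\,A\,\rho(g)^{\top}$.

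Finally I would recover $c_g$ by abelianization: any automorphism preserves the commutator ideal, so $\theta_g$ descends to the abelianization $C(S_4)$ of \autoref{lem:maximalclassical}, where it is again the inner automorphism implemented by $\chi_g$. On a \emph{commutative} Hopf algebra this inner automorphism is exactly the pullback along conjugation by $g$, i.e.\ its Gelfand dual is $c_g$. Thus $\theta_g$ restricts on the character subgroup to $c_g$, which is the claimed extension (and exhibits the easy Klein subgroups as non-characteristic, in contrast with \autoref{lem:characteristicklein}). The one pitfall to avoid is attempting to verify directly that $A\mapsto\rho(g)\,A\,\rho(g)^{\top}$ preserves relations (2)--(5) of \autoref{def:SO_(-1)(3)}: for general $g$ the matrix $\rho(g)$ is not monomial, unlike the diagonal Klein case of \autoref{rmk:rhoklein}, so this is not transparent. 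Invoking the inner-automorphism construction makes well-definedness automatic and bypasses that computation, which is the main point of the argument.
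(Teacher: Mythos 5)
Your proof is correct, and it lands on exactly the same automorphism as the paper's: for a conjugating element $g$ carrying one easy Klein subgroup to another, the extension is $A\mapsto\rho(g)A\rho(g)^{\top}$ (the paper picks a transposition $(2i)$, for which $\rho(2i)^{\top}=\rho(2i)$, and writes $A\mapsto\rho(2i)A\rho(2i)$; your choice of a $3$-cycle to cycle the three easy Klein subgroups is equally good). Where you genuinely diverge is in justifying that this formula defines a Hopf ${}^{*}$-automorphism of $\Pol(SO_{-1}(3))$: the paper simply asserts it, implicitly leaving a direct check of the defining relations to the reader, whereas you realize the map as the adjoint action $x\mapsto\chi_g(x_{(1)})x_{(2)}\chi_g^{-1}(x_{(3)})$ of the evaluation character at $g\in Gr(\widehat{SO_{-1}(3)})=S_4$. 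That argument is sound (multiplicativity from $\chi_g$ being a character, comultiplicativity from $\chi_g^{-1}*\chi_g=\varepsilon$, invertibility via $\chi_{g^{-1}}$, and the descent to the abelianization recovering $c_g$), it is basis-independent, and it generalizes to any element of the intrinsic group of any compact quantum group --- a nontrivial gain in robustness over the paper's one-line assertion. One small correction to your closing remark, though: in the basis implicit in \autoref{rmk:rhoklein} (the one diagonalizing the diagonal Klein subgroup), $\rho(g)$ \emph{is} a signed permutation matrix for every $g\in S_4$ --- the standard representation of $S_4$ permutes the three vectors $\tfrac12(e_1+e_2-e_3-e_4)$, $\tfrac12(e_1-e_2+e_3-e_4)$, $\tfrac12(e_1-e_2-e_3+e_4)$ up to sign --- so the direct verification of relations (1)--(5) that you call a pitfall is in fact quite feasible and is presumably what the paper has in mind. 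Your abstract route is not \emph{necessary}, but it is cleaner and avoids any dependence on that choice of basis.
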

\begin{proof}
 Let $\{\id, (12), (34), (12)(34)\}$ and $\{\id, (1i), (2j), (1i)(2j)\}$ be two distinct Klein subgroups of $S_4$. It is easy to check that conjugation by $(2i)$ gives the first part of the lemma. In order to get the automorphism $u\colon H'\to H'$ extending it, simply consider the map $A\mapsto \rho(2i)A\rho(2i)$, where $\rho$ is the map from Lemma \ref{lem:maximalclassical}.
\end{proof}

\subsection{Twistings applied to $SO(3)$.}\label{sec:twistingapplied} 
Let $H=\Pol(SO(3))$ denote the unique dense \HA\ in $C(SO(3))$, recall that $K=\mathbb{C}[\mathbb{Z}_2\times\mathbb{Z}_2]$. The restriction of functions on $SO(3)$ to its diagonal subgroup gives a \HA\ surjection \[H\ni x_{i,j}\xmapsto{\pi_d}\delta_{i,j}t_i\in K.\]

Let $\sigma\colon K\otimes K\cong\mathbb{C}[(\mathbb{Z}_2\times\mathbb{Z}_2)^2]\to\mathbb{C}$ be the unique linear extension of the mapping

\begin{equation}\label{eq:diagonalcocycle} \sigma(t_i,t_j)=
\left\{
\begin{array}{rl} 
-1 & \mathrm{ for\ } (i,j)\in\{(1,1),(1,3),(2,1),(2,2),(3,2),(3,3)\}\\
1 & \mathrm{otherwise}
\end{array}\right.
\end{equation}
In other words, for $1\leq i,j\leq2$ we have that $\sigma(t_i,t_j)=-1$ if and only if $i\leq j$ and we extend this definition by bimultiplicativity. Then $\sigma$ is a 2-cocycle in the sense of \eqref{eq:cocycle}. We will work with the cocycle $\sigma_d=\sigma\circ(\pi_d\otimes\pi_d)$ on $H$. Note that $\sigma_d^{-1}=\sigma_d$.

Similarly, one can define the 2-cocycle $\sigma'_{d}\colon H'\otimes H'\to\mathbb{C}$ via $\pi_d'\colon H'\to K$ (recall the definition of $\pi_d'$ given in \eqref{eq:pid'}).
\begin{theorem}[{\cite[Theorem 5.1]{BB09}}] The \HA s $H^{\sigma_d}$ and $\Pol(SO_{-1}(3))$ are isomorphic. The isomorphism is given by $[x_{i,j}]\mapsto a_{i,j}$.
\end{theorem}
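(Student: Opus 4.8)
The plan is to work directly with the explicit product formula for $H^{\sigma_d}$ and to exploit that $\sigma_d=\sigma\comp(\pi_d\otimes\pi_d)$ factors through the diagonal map $\pi_d(x_{a,b})=\delta_{a,b}t_a$, so that evaluating the cocycle on matrix coefficients produces Kronecker deltas collapsing the relevant sums. First I would compute the twisted product of two generators. Writing the twofold coproduct as $\Delta^{(2)}(x_{i,j})=\sum_{p,q}x_{i,p}\otimes x_{p,q}\otimes x_{q,j}$ and feeding it into
\[[x][y]=\sigma_d(x_{(1)},y_{(1)})\,\sigma_d^{-1}(x_{(3)},y_{(3)})\,[x_{(2)}y_{(2)}],\]
the factors $\sigma_d(x_{i,p},x_{k,r})=\delta_{i,p}\delta_{k,r}\sigma(t_i,t_k)$ and $\sigma_d^{-1}(x_{q,j},x_{s,l})=\delta_{q,j}\delta_{s,l}\sigma^{-1}(t_j,t_l)$ annihilate all but one term of the quadruple sum, leaving
\[[x_{i,j}][x_{k,l}]=\sigma(t_i,t_k)\,\sigma^{-1}(t_j,t_l)\,[x_{i,j}x_{k,l}].\]

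Next I would read off the relations of \autoref{def:SO_(-1)(3)}. Since $H$ is commutative and $\sigma^{-1}=\sigma$ on $K$, comparing the product above with the opposite order gives $[x_{i,j}][x_{k,l}]=c(i,k)\,c(j,l)\,[x_{k,l}][x_{i,j}]$, where $c(a,b)=\sigma(t_a,t_b)\sigma(t_b,t_a)$ equals $+1$ for $a=b$ and $-1$ for $a\neq b$ (a direct check from \eqref{eq:diagonalcocycle}). This reproduces exactly the sign pattern of relations (2)--(4): a common row or a common column forces anticommutation, while distinct row and column force commutation. For orthogonality I would use $\sigma(t_k,t_k)=-1$, which gives $[x_{i,k}][x_{j,k}]=-\sigma(t_i,t_j)[x_{i,k}x_{j,k}]$, whence $\sum_k[x_{i,k}][x_{j,k}]=-\sigma(t_i,t_j)\big[\sum_k x_{i,k}x_{j,k}\big]=-\sigma(t_i,t_j)\delta_{i,j}[\mathds{1}]=\delta_{i,j}\mathds{1}$, using orthogonality of $X$ and $\sigma(t_i,t_i)=-1$; the column relation is identical. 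Self-adjointness of the $[x_{i,j}]$ follows because $\sigma$ is real-valued, so the twisted ${}^{\ast}$-structure still fixes the (real) matrix coefficients.

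The delicate step is relation (5). Here I would first establish, by induction from the two-generator case and the bimultiplicativity of $\sigma$, that the scalar in a product of generators factorizes over pairs:
\[[x_{1,\sigma(1)}][x_{2,\sigma(2)}][x_{3,\sigma(3)}]=\Big(\prod_{s<t}\sigma(t_s,t_t)\,\sigma(t_{\sigma(s)},t_{\sigma(t)})\Big)\,[x_{1,\sigma(1)}x_{2,\sigma(2)}x_{3,\sigma(3)}].\]
The constant part $\prod_{s<t}\sigma(t_s,t_t)=\sigma(t_1,t_2)\sigma(t_1,t_3)\sigma(t_2,t_3)=-1$, while the permutation-dependent part $\prod_{s<t}\sigma(t_{\sigma(s)},t_{\sigma(t)})$ equals $-\sgn(\sigma)$, as one verifies case by case over the six elements of $S_3$. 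Hence the total scalar is $\sgn(\sigma)$, and summing,
\[\sum_{\sigma\in S_3}[x_{1,\sigma(1)}][x_{2,\sigma(2)}][x_{3,\sigma(3)}]=\Big[\sum_{\sigma\in S_3}\sgn(\sigma)\,x_{1,\sigma(1)}x_{2,\sigma(2)}x_{3,\sigma(3)}\Big]=[\mathds{1}]=\mathds{1},\]
by the determinant relation defining $SO(3)$. I expect this sign bookkeeping — checking that the cocycle factor over a permutation reproduces exactly $\sgn(\sigma)$, thereby converting the signed $SO(3)$ determinant into the unsigned $SO_{-1}(3)$ relation — to be the main obstacle, the remaining verifications being formal.

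Finally, the universal property of $C(SO_{-1}(3))$ yields a ${}^{\ast}$-homomorphism $\phi\colon\Pol(SO_{-1}(3))\to H^{\sigma_d}$ with $a_{i,j}\mapsto[x_{i,j}]$, and it is a Hopf map because twisting leaves the coalgebra unchanged, so $\Delta([x_{i,j}])=\sum_k[x_{i,k}]\otimes[x_{k,j}]$ matches $\Delta(a_{i,j})$. To see it is an isomorphism I would invoke that the twist is involutive ($\sigma_d^{-1}=\sigma_d$): running the identical computation on $\Pol(SO_{-1}(3))$ with its own diagonal cocycle produces a surjection back onto $H=\Pol(SO(3))$, and the two maps are mutually inverse on the generating matrix coefficients, so $\phi$ is bijective.
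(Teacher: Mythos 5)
Your argument is correct, but note that the paper itself does not prove this statement: it is imported verbatim from \cite[Theorem 5.1]{BB09}, so there is no internal proof to compare against, and what you have written is essentially the verification carried out in that cited source. The computational core checks out: the collapse of the twisted product to $[x_{i,j}][x_{k,l}]=\sigma(t_i,t_k)\sigma(t_j,t_l)[x_{i,j}x_{k,l}]$ is right; the symmetrized signs $c(a,b)=\sigma(t_a,t_b)\sigma(t_b,t_a)$ (equal to $+1$ iff $a=b$) reproduce relations (2)--(4); the orthogonality computation using $\sigma(t_k,t_k)=-1$ is correct; and your bookkeeping for relation (5) gives total scalar $\sgn(\sigma)$ --- it is worth observing that this total depends only on the symmetric quantities $c(a,b)$, so it is insensitive to the discrepancy between the table \eqref{eq:diagonalcocycle} and the ``in other words'' sentence following it. Two points deserve tightening. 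First, surjectivity of the map $a_{i,j}\mapsto[x_{i,j}]$ needs one line: every product $[x_{i_1,j_1}]\cdots[x_{i_m,j_m}]$ is a \emph{nonzero} scalar multiple of the class of the ordinary monomial $x_{i_1,j_1}\cdots x_{i_m,j_m}$, and these classes span $H^{\sigma_d}=H$ as a vector space, so the $[x_{i,j}]$ generate. Second, the inverse is set up slightly askew: the symmetric computation produces a surjection \emph{from} $H=\Pol(SO(3))$ \emph{onto} $(H')^{\sigma'_d}$ (because the elements $[a_{i,j}]$ satisfy the commutative $SO(3)$ relations, including $[\det]=\mathds{1}$ by the reverse sign count), not ``back onto $H$''; one then twists this surjection by $\sigma_d$ via \autoref{prop:bijection}, uses $\sigma_d^{-1}=\sigma_d$ and the fact that double twisting restores the original product, and checks that the composite $H'\to H^{\sigma_d}\to H'$ fixes each generator $a_{i,j}$, which gives injectivity. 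These are routine repairs; the proof strategy and all the sign computations are sound.
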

With this in hand, and the results of \autoref{sec:twistingsgeneral} and \autoref{sec:characteristic}, we are able to classify all the automorphisms of $SO_{-1}(3)$. Consider an automorphism $\theta\colon C(SO_{-1}(3))\to C(SO_{-1}(3))$ and the following diagram:

\begin{center}\begin{tikzpicture}
[bend angle=36,scale=2,auto,
pre/.style={<<-,shorten <=1pt,semithick},
post/.style={->>,shorten >=1pt,semithick}]
\node (G1) at (-1.5,0) {$H'$};
\node (G2) at (0,0) {$H'$}
edge [pre] node[auto,swap] {$\theta$} (H);
\node (Klein) at (1.5,0) {$K$}
edge [pre] node[auto,swap] {$\pi_d'$} (G2);
\draw[->] (H) .. controls (-0.5,0.5) and (0.5,0.5) .. node[midway] {$\chi\comp\pi_d'$} (Klein) ;
 \end{tikzpicture}\end{center}
Thanks to Lemma \ref{lem:characteristicklein}, the above diagram is well defined (the diagonal Klein subgroup is characteristic, hence $\pi'_d\comp\theta=\chi\comp\pi'_d$ for some automorphism $\chi\colon K\to K$), so we can use \autoref{prop:bijection} to ``untwist'' this diagram and obtain an automorphism of $SO(3)$ (which should be easier to classify). Apply the cocycle $\sigma_d'$ (recall that the Klein groups have no nontrivial twist, cf. \cite[Lemma 6.2]{BB09}) and \autoref{prop:bijection} gives us the following diagram:
\begin{center}\begin{tikzpicture}
[bend angle=36,scale=2,auto,
pre/.style={<<-,shorten <=1pt,semithick},
post/.style={->>,shorten >=1pt,semithick}]
\node (G1) at (-1.5,0) {$H$};
\node (G2) at (0,0) {$H$}
edge [pre] node[auto,swap] {$\theta^{\sigma'_d}$} (H);
\node (Klein) at (1.5,0) {$K$}
edge [pre] node[auto,swap] {$\pi_d$} (G2);
\draw[->] (H) .. controls (-0.5,0.5) and (0.5,0.5) .. node[midway] {$\chi\comp\pi_d$} (Klein) ;
 \end{tikzpicture}\end{center}
 and $\theta^{\sigma'_d}=[\theta]$, where $[\cdot]$ is understood as in \autoref{sec:twistingsgeneral}. As any automorphism of $SO(3)$ is inner, it is enough to check which of them preserve the diagonal Klein subgroup. It is clear that conjugation by $\rho(x)$, $x\in S_4$ (where $\rho\colon S_4\to O(3)$ is introduced in Lemma \ref{lem:maximalclassical}), is such an automorphism (as the diagonal Klein subgroup is characteristic in $S_4$). Using \autoref{rmk:rhoklein} one can use brute-force computations to write a formula for $F\in SO(3)$ that conjugates the diagonal Klein subgroup in $SO(3)$ and arrive at a system of constraints saying that the matrix $F$ has to have two zero entries in each row and column (the remaining entry, because of norm 1 condition in each row and column, has to be $\pm1$). There are precisely $24=4!=|S_4|$ of such matrices, hence we arrive at the following
 \begin{theorem}\label{thm:automorphisms}
  Every automorphism of $SO_{-1}(3)$ is given by $A\mapsto \rho(x)^{\top}A\rho(x)$ for some $x\in S_4$. In other words, $Aut(SO_{-1}(3))\cong S_4$.
 \end{theorem}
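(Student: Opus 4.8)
The plan is to transport the classification from $SO_{-1}(3)$ to the classical group $SO(3)$ by untwisting, since the automorphisms of the classical object are completely understood. Concretely, let $\theta\colon H'\to H'$ be a Hopf ${}^*$-automorphism of $\Pol(SO_{-1}(3))$. By \autoref{lem:characteristicklein} the diagonal Klein subgroup is characteristic, so there is an automorphism $\chi\colon K\to K$ with $\pi_d'\comp\theta=\chi\comp\pi_d'$; in particular $\theta$ is a Hopf surjection factoring through $\pi_d'$ in the sense required by \autoref{prop:bijection}. Feeding this into \autoref{prop:bijection} with the cocycle $\sigma_d'$ produces $[\theta]\colon H'^{\sigma_d'}\to H'^{\sigma_d'}$. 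Because $H'\cong H^{\sigma_d}$ and $\sigma_d=\sigma_d^{-1}$, a second twist returns the untwisted algebra, so $H'^{\sigma_d'}\cong H=\Pol(SO(3))$ and $[\theta]$ becomes a genuine Hopf ${}^*$-automorphism of $SO(3)$ which, tracking the maps, again factors through $\pi_d$, i.e.\ preserves the diagonal Klein subgroup of $SO(3)$. Since $[\cdot]$ is the identity on the underlying coalgebra and hence carries isomorphisms to isomorphisms, the bijectivity in \autoref{prop:bijection} shows that $\theta\mapsto[\theta]$ is a bijection onto the set of automorphisms of $SO(3)$ fixing the diagonal Klein subgroup setwise.

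It then remains to classify those. The key classical input is that every automorphism of $SO(3)$ is inner, hence of the form $X\mapsto F^{\top}XF$ for some $F\in SO(3)$ (equivalently $F\in O(3)$, as $-I$ is central and acts trivially). Next I would prove the elementary linear-algebra lemma that such a conjugation preserves the diagonal Klein subgroup — whose three nontrivial elements are the diagonal sign matrices listed in \autoref{rmk:rhoklein} — if and only if $F$ is a monomial (signed permutation) matrix: the common eigenlines of the three commuting involutions are exactly the coordinate axes $\mathbb{R}e_1,\mathbb{R}e_2,\mathbb{R}e_3$, conjugation by $F$ preserves the Klein group precisely when it permutes this set of lines, and that happens exactly for monomial $F$. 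The monomial matrices of determinant $1$ form a group of order $24$ isomorphic to $S_4$, and since $\rho(x)$ is monomial for every $x\in S_4$ (by \autoref{lem:maximalclassical} and \autoref{rmk:rhoklein}), while $\rho(x)$ and $-\rho(x)$ induce the same conjugation, the $24$ resulting automorphisms are exactly $X\mapsto\rho(x)^{\top}X\rho(x)$, $x\in S_4$. Transporting back through the twist (conjugation by a monomial matrix merely permutes and re-signs the generators, so it survives the cocycle deformation) yields the stated formula on $SO_{-1}(3)$, and $x\mapsto(A\mapsto\rho(x)^{\top}A\rho(x))$ is injective because $\rho$ is faithful and $-I\notin\rho(S_4)$; hence $Aut(SO_{-1}(3))\cong S_4$.

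The main obstacle I anticipate is bookkeeping at the twisting step rather than any deep difficulty: one must verify carefully that \autoref{prop:bijection} matches \emph{automorphisms} (not merely surjections) on the two sides and that the diagonal-Klein-preserving condition is transported faithfully, and one must confirm that conjugation by a monomial matrix really defines a ${}^*$-automorphism of $C(SO_{-1}(3))$, i.e.\ respects the anticommutation relations (2)--(4) of \autoref{def:SO_(-1)(3)}; both are dictated by the twisting formalism but require care with signs. The two genuinely external inputs — that $SO(3)$ has no outer automorphisms, and the monomial-matrix eigenspace lemma — are standard and do not present real difficulty.
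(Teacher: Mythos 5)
Your proposal is correct and follows essentially the same route as the paper: reduce via the characteristic diagonal Klein subgroup and \autoref{prop:bijection} to automorphisms of $SO(3)$ preserving the diagonal Klein subgroup, observe these are all inner, and identify the admissible conjugating matrices as the $24$ monomial ones, giving $S_4$ via $\rho$. The only divergence is that where the paper establishes the monomial-matrix constraint by brute-force computation from \autoref{rmk:rhoklein}, you derive it conceptually from the fact that conjugation must permute the common eigenlines of the three commuting involutions, and you are somewhat more careful about the bijectivity bookkeeping in the twisting step; both refinements are sound.
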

\subsection{The embeddings \texorpdfstring{$O_{-1}(2)\subset SO_{-1}(3)$}{Oq(2)<SOq(3)}}\label{sec:embeddings1}
Recall an embedding $O_{-1}(2)\subset SO_{-1}(3)$ from \eqref{eq:canonicalembedding}:
\[a_{i,j}\mapsto
\left\{
\begin{array}{ll} 
\tilde{a}_{i,j} & \mathrm{ for\ } 1\leq i,j\leq2\\
\tilde{a}_{1,1}\tilde{a}_{2,2}+\tilde{a}_{1,2}\tilde{a}_{2,1} &  \mathrm{ for\ } i=j=3\\
0 & \mathrm{otherwise}
\end{array}\right.\colon C(SO_{-1}(3))\twoheadrightarrow C(O_{-1}(2))\]
There are other embeddings $O_{-1}(2)\subset SO_{-1}(3)$, which are classified by the following 
\begin{theorem}\label{thm:quantumembeddings}
 There are three copies of $O_{-1}(2)\subset SO_{-1}(3)$. The three copies are conjugate (via an automorphism described in \autoref{thm:automorphisms}).
\end{theorem}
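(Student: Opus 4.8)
The plan is to mirror the proof of \autoref{thm:automorphisms}: first show that every copy of $O_{-1}(2)$ is forced to contain the diagonal Klein subgroup, and then untwist via \autoref{prop:bijection} so that the whole classification reduces to a transparent classical computation inside $SO(3)$.

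First I would pin down the intrinsic subgroup of $O_{-1}(2)$. Abelianising the relations of \autoref{def:O_(-1)(2)} forces $\tilde a_{i,j}\tilde a_{i,k}=0$ for $j\neq k$ (and likewise in columns), so together with orthogonality the matrix $\tilde A$ becomes a $2\times2$ signed permutation matrix; hence $Gr(\widehat{O_{-1}(2)})$ is the hyperoctahedral group of order $8$, i.e.\ a copy of $D_4$. Since the intrinsic subgroup is functorial for quantum subgroups (characters pull back injectively along a surjection), any copy $\mathbb{L}\cong O_{-1}(2)$ inside $SO_{-1}(3)$ satisfies $Gr(\widehat{\mathbb{L}})\cong D_4\hookrightarrow Gr(\widehat{SO_{-1}(3)})=S_4$, so its image is a Sylow $2$-subgroup of $S_4$. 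As the diagonal Klein group $V$ is normal in $S_4$ it lies in every Sylow $2$-subgroup, whence $V\subseteq\mathbb{L}$; thus \emph{every} copy of $O_{-1}(2)$ contains the diagonal Klein subgroup of $SO_{-1}(3)$ (recall \autoref{rmk:rhoklein}).

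Having secured this, I would run \autoref{prop:bijection} with the cocycle $\sigma'_d$ exactly as for the automorphisms: copies of $O_{-1}(2)$ containing $V$ correspond bijectively to those classical subgroups $L$ with $V\subseteq L\subseteq SO(3)$ whose $\sigma_d$-twist is isomorphic to $O_{-1}(2)$. Because the twisting is an involution here ($\sigma_d^{-1}=\sigma_d$), the latter condition is equivalent to $L\cong O(2)$, \emph{provided} one checks the key identification that the $\sigma_d$-twist of the standard axis-stabiliser $\{\mathrm{diag}(R,\det R):R\in O(2)\}\subseteq SO(3)$ is the canonical embedding \eqref{eq:canonicalembedding}; concretely, the deformation turns the classical determinant relation $a_{3,3}\mapsto\tilde x_{1,1}\tilde x_{2,2}-\tilde x_{1,2}\tilde x_{2,1}$ into the $+$-sign relation $\tilde a_{1,1}\tilde a_{2,2}+\tilde a_{1,2}\tilde a_{2,1}$ and produces the row/column anticommutation of \autoref{def:O_(-1)(2)}. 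This identification, together with the verification that the bijection of \autoref{prop:bijection} really matches ``$\cong O_{-1}(2)$'' copies with ``$\cong O(2)$'' copies and loses nothing, is the step I expect to be the main obstacle.

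It then remains to count and conjugate on the classical side. The closed subgroups of $SO(3)$ isomorphic to $O(2)$ are exactly the stabilisers of lines (the normalisers of the maximal tori), and a line is preserved by all three $\pi$-rotations $\mathrm{diag}(-1,-1,1),\ \mathrm{diag}(-1,1,-1),\ \mathrm{diag}(1,-1,-1)$ of \autoref{rmk:rhoklein} if and only if it is one of the three coordinate axes, which is a one-line linear-algebra check. Hence there are precisely three classical $O(2)$'s containing $V$, and by the bijection precisely three copies of $O_{-1}(2)\subseteq SO_{-1}(3)$. Finally, the coordinate permutation matrices in $\rho(S_4)$ act on the three axes through the surjection $S_4\to S_4/V\cong S_3$, hence permute them transitively; untwisting these conjugations and invoking \autoref{thm:automorphisms}, which realises every automorphism of $SO_{-1}(3)$ precisely as a conjugation $A\mapsto\rho(x)^{\top}A\rho(x)$, shows that the three copies of $O_{-1}(2)$ are conjugate.
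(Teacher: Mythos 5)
Your proposal is correct and follows essentially the same route as the paper: force the diagonal Klein subgroup into every copy via the intrinsic subgroup $D_4\hookrightarrow S_4=Gr(\widehat{SO_{-1}(3)})$, untwist with $\sigma'_d$ via \autoref{prop:bijection}, classify the classical $O(2)\subset SO(3)$ containing the diagonal Klein group, and conjugate by $\rho(S_4)$ using \autoref{thm:automorphisms}. The only cosmetic differences are that you obtain $V\subseteq\hat{\varphi}(D_4)$ from Sylow theory where the paper lists the embedding explicitly (\autoref{lem:imagehatphi}) and that you make the count of the three invariant axes explicit; the ``main obstacle'' you flag is precisely the $[\det(\tilde{X})]=\perm(\tilde{A})$ computation with which the paper closes its proof.
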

\begin{proof} Recall from \cite[Theorem 7.1 \& Proposition 7.3]{BB09} that the group of characters of $O_{-1}(2)$ is isomorphic to $D_4$, the dihedral group of a square (or one can simply compute using relations (2-3) of \autoref{def:O_(-1)(2)} together with commutation relations: one obtains the only eight orthogonal matrices with entries $0,\pm1$ and uses classification of groups of order $8$).  Let $\Phi\colon H'=\Pol(SO_{-1}(3))\to \Pol(O_{-1}(2))$ be a \HA\ quotient. Consider the following diagram:
 \begin{center}\begin{tikzpicture}
[bend angle=36,scale=2,auto,
pre/.style={<<-,shorten <=1pt,semithick},
post/.style={->>,shorten >=1pt,semithick}]
\node (G) at (-1,0) {$H'$};
\node (H) at (1,0) {$\Pol(O_{-1}(2))$}
edge [pre] node[auto,swap] {$\Phi$} (G);
\node (Gclass) at (0,-1) {$C(S_4)$}
edge [pre] node[auto] {$q_{SO_{-1}(3)}$} (G);
\node (Hclass) at (2,-1) {$C(D_4)$}
edge [pre] node[auto,swap] {$q_{O_{-1}(2)}$} (H)
edge [pre] node[auto,swap] {$\varphi$} (Gclass);
 \end{tikzpicture}\end{center} 
The existence of the map $\varphi$ as in the diagram above follows from the universal property of abelianization. Because all the involved morphisms are \HA\ morphisms, so is $\varphi$. Similarly, because all the involved morphisms are surjections, so is $\varphi$. Thus $\hat{\varphi}$, the Gelfand transform of $\varphi$, is a monomorphism $\hat{\varphi}\colon D_4\hookrightarrow S_4$. Let us take for granted that the image of $\hat{\varphi}$ contains the diagonal Klein subgroup of $S_4$ (the proof of this statement is postponed to \autoref{lem:imagehatphi} just below the end of the proof of \autoref{thm:quantumembeddings}). 

As the diagonal Klein subgroup is characteristic in $SO_{-1}(3)$, this gives us the following diagram of morphisms:
\begin{center}\begin{tikzpicture}
[bend angle=36,scale=2,auto,
pre/.style={<<-,shorten <=1pt,semithick},
post/.style={->>,shorten >=1pt,semithick}]
\node (G) at (-1.5,0) {$H'$};
\node (H) at (0,0) {$\Pol(O_{-1}(2))$}
edge [pre] node[auto,swap] {$\Phi$} (G);
\node (Klein) at (1.5,0) {$K$}
edge [pre] node[auto,swap] {$\chi\circ\tilde{\pi}$} (H);
\draw[->] (G) .. controls (-0.5,-0.5) and (0.5,-0.5) .. node[midway] {$\pi'_d$} (Klein) ;
 \end{tikzpicture}\end{center}
 where $\tilde{\pi}$ is obtained by composing $q_{O_{-1}(2)}$ with \HA\ quotient map corresponding to restriction to the diagonal Klein subgroup in $\hat{\varphi}(D_4)$ and $\chi$ is an automorphism of the Klein group (its existence is a consequence of the diagonal Klein group being characteristic). Using \autoref{prop:bijection}, we untwist this diagram and arrive at
\begin{center}\begin{tikzpicture}
[bend angle=36,scale=2,auto,
pre/.style={<<-,shorten <=1pt,semithick},
post/.style={->>,shorten >=1pt,semithick}]
\node (G) at (-1.5,0) {$H$};
\node (H) at (0,0) {$\Pol(O(2))$}
edge [pre] node[auto,swap] {$[\Phi]$} (G);
\node (Klein) at (1.5,0) {$K$}
edge [pre] node[auto,swap] {$\Pi$} (H);
\draw[->] (G) .. controls (-0.5,-0.5) and (0.5,-0.5) .. node[midway] {$\pi_d$} (Klein) ;
 \end{tikzpicture}\end{center} 
 The closed subgroups of $SO(3)$ isomorphic to $O(2)$ are all of the form
 \[\left\{F\begin{pmatrix*} A & 0 \\ 0 & \det(A)\end{pmatrix*}F^{\top}:A\in O(2)\right\} \]
 for some matrix $F\in SO(3)$ (see, e.g. \cite[Theorem 6.1]{GSS}). The occurrence of $O(2)$ in $SO(3)$ coming from the above diagram contains the diagonal Klein subgroup. Because $\mathbb{Z}_2\times\mathbb{Z}_2\subseteq D_4$ is characteristic, we know from (the proof of) \autoref{thm:automorphisms} that the matrices $F$ are necessarily of the form $\rho(x)$ for some $x\in S_4$.  To verify \eqref{eq:canonicalembedding} it is then enough to check that 
 \[\begin{split}
 [\det(\tilde{X})]=[\tilde{x}_{1,1}\tilde{x}_{2,2}]-[\tilde{x}_{1,2}\tilde{x}_{2,1}]=\\
 =\sigma(t_1,t_2)\sigma(t_1,t_2)[\tilde{x}_{1,1}][\tilde{x}_{2,2}]+\sigma(t_1,t_2)\sigma(t_2,t_1)[\tilde{x}_{1,2}][\tilde{x}_{2,1}]=\\
   =\tilde{a}_{1,1}\tilde{a}_{2,2}+\tilde{a}_{1,2}\tilde{a}_{2,1}=\perm(\tilde{A}) 
   \end{split} \]\end{proof}
   \begin{lemma}\label{lem:imagehatphi}Image of $\hat{\varphi}$ contains the diagonal Klein subgroup of $S_4$.  \end{lemma}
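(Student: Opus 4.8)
The plan is to run a purely group-theoretic argument on the image of $\hat\varphi$, invoking nothing beyond what the main proof of \autoref{thm:quantumembeddings} has already produced — in particular the injectivity of $\hat\varphi\colon D_4\hookrightarrow S_4$, which was extracted there from the surjectivity of $\varphi$. First I would note that since $\hat\varphi$ is a monomorphism and $|D_4|=8$, its image $\hat\varphi(D_4)$ is a subgroup of $S_4$ of order $8$. Because $|S_4|=24=2^3\cdot 3$, an order-$8$ subgroup is exactly a $2$-Sylow subgroup of $S_4$, so $\hat\varphi(D_4)\in\mathrm{Syl}_2(S_4)$. (As a consistency check, the $2$-Sylow subgroups of $S_4$ are indeed dihedral of order $8$, matching the source group $D_4$.)

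The key input is then that the diagonal Klein subgroup $V=\{\id,(12)(34),(13)(24),(14)(23)\}$ of $S_4$ — identified via Lemma \ref{lem:maximalclassical} and \autoref{rmk:rhoklein} as the image of the diagonal Klein subgroup of $SO_{-1}(3)$ — is \emph{normal} in $S_4$, being the kernel of the surjection $S_4\twoheadrightarrow S_3$, and is a $2$-group. A normal $2$-subgroup is contained in every $2$-Sylow subgroup: for any $P\in\mathrm{Syl}_2(S_4)$ the set $VP$ is a subgroup (as $V$ is normal), with $|VP|=|V|\,|P|/|V\cap P|=32/|V\cap P|$, and this must divide $|S_4|=24$; since $|V\cap P|$ divides $4$, the only admissible value is $|V\cap P|=4$, i.e.\ $V\subseteq P$. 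Equivalently, $V=O_2(S_4)=\bigcap_{P}P$ is the intersection of all $2$-Sylow subgroups. Applying this with $P=\hat\varphi(D_4)$ yields $V\subseteq\hat\varphi(D_4)$, which is precisely the assertion of \autoref{lem:imagehatphi}.

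I would emphasize that it is exactly the normality of $V$ that singles out the diagonal Klein subgroup here: an easy Klein subgroup such as $\{\id,(12),(34),(12)(34)\}$ is not normal in $S_4$ and does not lie in every $2$-Sylow subgroup, so the same argument provably fails for it. There is no genuinely hard step once injectivity of $\hat\varphi$ is granted; the only point requiring honesty is that $\hat\varphi(D_4)$ really has full order $8$ (hence is Sylow), which rests on the surjectivity of $\varphi$ established in the proof of \autoref{thm:quantumembeddings}, together with the normality of $V$ recorded in \autoref{lem:characteristicklein}. A more ``quantum'' alternative would be to argue that, since $\Pol(O_{-1}(2))$ is noncommutative, the cocycle $\sigma'_d$ pulled back along $\Phi$ must stay nontrivial, which forces the support of $\sigma'_d$ — namely $V$ — into the image; but the Sylow argument above is shorter and avoids tracking the restriction of the cocycle.
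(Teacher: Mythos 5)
Your proof is correct and follows essentially the same route as the paper: the paper observes that all order-$8$ subgroups of $S_4$ are conjugate (i.e.\ Sylow $2$-subgroups) and that the diagonal Klein subgroup is normal (characteristic), hence lies in every copy of $D_4$; you merely make the Sylow/normal-$2$-subgroup bookkeeping explicit via the $|VP|$ computation. No gap.
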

\begin{proof}
Up to an inner automorphism, the only way to embed the dihedral group into the symmetric group is via \[\hat{\varphi}(D_4)=\{\id, (12), (34), (12)(34), (13)(24), (14)(23), (1234), (1432)\}\]
and the diagonal Klein subgroup of $S_4$ is precisely $\{\id, (12)(34), (13)(24), (14)(23)\}$, which is characteristic (hence it appears as a subgroup of any possible occurrences of $D_4$ in $S_4$).
\end{proof}
 In summary, \autoref{thm:quantumembeddings} says that any embedding $O_{-1}(2)\subset SO_{-1}(3)$ is given by the following map:
 \begin{equation}\label{eq:genericembedding1}
 A\xmapsto{\Phi_x} \rho(x)\begin{pmatrix*} \tilde{A} & 0 \\ 0 & \perm(\tilde{A})\end{pmatrix*}\rho(x)^{\top}
\end{equation}
where $x\in S_4$ and $\rho$ is as in \autoref{lem:maximalclassical} and $\perm(\tilde{A})=\tilde{a}_{1,1}\tilde{a}_{2,2}+\tilde{a}_{1,2}\tilde{a}_{2,1}$ is the permanent.
\subsection{Conclusions}\label{sec:conclusions}
Recall from \cite{Bic00} that $S_4^{\tau}$ is a cocycle twist of $S_4$ by the cocycle $\sigma$ induced from a non-diagonal Klein subgroup. From \cite[Theorem 6.1]{BB09} we know that there $S_4^{\tau}\subseteq S_4^+$ and that $Gr(\widehat{S_4^{\tau}})=D_4$ by \cite[Lemma 6.7]{BB09}. Let then $O_{-1}(2)\subseteq S_4^+$ be embedded in such a way that $Gr(\widehat{O_{-1}(2)})=Gr(\widehat{S_4^{\tau}})$ as subgroups of $S_4=Gr(\widehat{S_4^+})$ (we know from \autoref{thm:quantumembeddings} that it is possible to find such a copy of $O_{-1}(2)$). But \cite[Theorem 7.1]{BB09}, establishing the full list of subgroups of $O_{-1}(2)$, ensures us that $S_4^{\tau}\not\subset O_{-1}(2)$ and thus $\GG=\overline{\langle O_{-1}(2),S_4^{\tau}\rangle}=S_4^+$, as this group as strictly bigger than $O_{-1}(2)$ and there are no intermediate groups between $O_{-1}(2)$ and $S_4^+$. But at the same time, 
\[S_4=Gr(\widehat{\GG})\neq\langle D_4,D_4\rangle=D_4\]
hence the inclusion in \autoref{thm:criterion} can be proper.

An additional consequence of our considerations is the following
\begin{proposition}
$S_4^+=\overline{\langle S_4^{\tau}\cup S_4\rangle}$ 
\end{proposition}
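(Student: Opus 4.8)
The plan is to identify the topologically generated object $\GG':=\overline{\langle S_4^{\tau},S_4\rangle}$ as a quantum subgroup of $S_4^+$ and then pin it down using exactly the same classification argument as in the proof of \autoref{thm:hopfimage}. By the definition of topological generation recalled in \autoref{sec:hopfimage}, $\GG'$ is a closed quantum subgroup of $S_4^+$ admitting both $S_4^{\tau}$ and $S_4$ as closed quantum subgroups; concretely, composing the map $\tilde\beta$ out of the Hopf image with the two coordinate projections yields Hopf ${}^{\ast}$-surjections $C^u(\GG')\twoheadrightarrow \Pol(S_4^{\tau})$ and $C^u(\GG')\twoheadrightarrow C(S_4)$ intertwining the coproducts.

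First I would compute the group of characters of $\GG'$. Since $S_4\subseteq\GG'$ is classical, the surjection $C^u(\GG')\twoheadrightarrow C(S_4)$ has commutative target and hence factors through the abelianization $C(Gr(\widehat{\GG'}))$; this gives $S_4\subseteq Gr(\widehat{\GG'})$. On the other hand $Gr(\widehat{\GG'})\subseteq Gr(\widehat{S_4^+})=S_4$, so $Gr(\widehat{\GG'})=S_4$. In particular $\GG'$ contains the diagonal Klein subgroup of $S_4$.

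Next I would invoke \cite[Theorem 6.1]{BB09}: as $\GG'$ contains the diagonal Klein subgroup, it appears on that list, and — as already observed in the proof of \autoref{thm:hopfimage} — the only two groups on the list whose group of characters equals $S_4$ are $S_4$ and $S_4^+$. It then remains to rule out $\GG'=S_4$, and this is precisely where the twisted copy is used. If $\GG'=S_4$, then the surjection $C^u(\GG')\twoheadrightarrow\Pol(S_4^{\tau})$ would read $C(S_4)\twoheadrightarrow\Pol(S_4^{\tau})$; since the image of a commutative algebra is commutative, the Hopf ${}^{\ast}$-algebra of $S_4^{\tau}$ would be commutative. But $S_4^{\tau}$ is the cocycle twist of $S_4$ by the nontrivial cocycle coming from a non-diagonal Klein subgroup (\cite{Bic00}, \cite[Theorem 6.1]{BB09}) and is genuinely noncommutative, a contradiction. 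Hence $\GG'=S_4^+$.

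I do not anticipate a serious obstacle: the statement is essentially a corollary of the machinery already in place, and the only points needing care are the two comparisons of character groups and the noncommutativity of $S_4^{\tau}$ that discards the classical alternative. One could alternatively derive the claim from the equality $\overline{\langle O_{-1}(2),S_4^{\tau}\rangle}=S_4^+$ established just above, but the self-contained classification argument is cleaner and avoids having to first place $O_{-1}(2)$ inside $\GG'$.
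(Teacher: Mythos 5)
Your proposal is correct and follows essentially the same route as the paper: compute $Gr(\widehat{\GG'})=S_4$, place $\GG'$ on the list of \cite[Theorem 6.1]{BB09}, and exclude the classical alternative $\GG'=S_4$ via $S_4^{\tau}\not\subset S_4$ (noncommutativity of $\Pol(S_4^{\tau})$). The paper's version is just a terser statement of the same argument.
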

\begin{proof}
 Let $\GG=\overline{\langle S_4^{\tau}\cup S_4\rangle}$. As $Gr(\widehat{\GG})=S_4$ and $\GG\neq S_4$ (because $S_4^{\tau}\not\subset S_4$), we can check on the list of \cite[Theorem 6.1]{BB09} that the only remaining quantum subgroup of $S_4^+$ with $Gr(\GG)=S_4$ is $S_4^+$ itself.
\end{proof}
\begin{remark}
  Recall $\widehat{\GG}$ is hyperlinear if and only if $L^{\infty}(\GG,h_{\GG})$ can be embedded into $\mathsf{R}^{\omega}$, where $\mathsf{R}$ is the hyperfinite $II_1$ factor and $\omega$ is a principal ultrafilter.
 \end{remark}
\begin{corollary}
 $\widehat{S_4^+}$ is hyperlinear.
\end{corollary}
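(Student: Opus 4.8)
The plan is to deduce hyperlinearity of $\widehat{S_4^+}$ from the topological generation statement proved in the Proposition immediately preceding this Corollary, namely $S_4^+=\overline{\langle S_4^{\tau}\cup S_4\rangle}$, by feeding it into the stability of the Connes embedding property under topological generation from \cite{BCV}. By the Remark just above, showing that $\widehat{S_4^+}$ is hyperlinear amounts to producing a trace-preserving embedding of $L^{\infty}(S_4^+,h)$ into $\mathsf{R}^{\omega}$, i.e.\ to verifying the Connes embedding property for the finite von Neumann algebra $L^{\infty}(S_4^+)$; note that $S_4^+$ is of Kac type, so $h$ is a trace and the ambient algebra is genuinely finite.

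First I would check that each of the two topological generators has a hyperlinear dual for trivial reasons. Indeed, $S_4$ is a finite group, so $L^{\infty}(S_4)=C(S_4)$ is finite-dimensional; and $S_4^{\tau}$, being a $2$-cocycle twist of the finite group $S_4$ in the sense of \autoref{sec:twistingsgeneral}, shares the underlying coalgebra of $C(S_4)$ and is therefore also a finite quantum group, with $\Pol(S_4^{\tau})$ finite-dimensional. Any finite-dimensional von Neumann algebra carrying a faithful trace embeds trace-preservingly into $\mathsf{R}$, hence into $\mathsf{R}^{\omega}$, so both $\widehat{S_4}$ and $\widehat{S_4^{\tau}}$ are hyperlinear.

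Next I would invoke the topological-generation machinery: by \cite{BCV}, if a compact quantum group $\GG$ is topologically generated by quantum subgroups whose duals are hyperlinear, then $\widehat{\GG}$ is hyperlinear. Applying this with $\GG=S_4^+$ and the generators $S_4, S_4^{\tau}$, and using the preceding Proposition, yields the claim.

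The main obstacle I anticipate is one of matching formulations rather than of substance: one must make sure that the version of the Connes embedding property shown to be stable under topological generation in \cite{BCV} coincides with the hyperlinearity notion of the Remark, and that the two equivalent descriptions of topological generation recalled in \autoref{sec:hopfimage} (via the Hopf image of $(\pi_1\otimes\pi_2)\comp\Delta$ or of $\pi_1\oplus\pi_2$) are the ones used there. An alternative route, bypassing \cite{BCV} entirely, would exploit that $S_4^+$ is coamenable (\cite{Ban99}): coamenability of a compact quantum group of Kac type forces $L^{\infty}(S_4^+)$ to be injective, hence hyperfinite and finite, and such an algebra embeds trace-preservingly into $\mathsf{R}^{\omega}$ directly.
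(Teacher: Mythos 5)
Your proposal is correct and follows exactly the paper's own argument: both deduce hyperlinearity of $\widehat{S_4^+}$ from the finiteness of $S_4$ and $S_4^{\tau}$ (hence hyperlinearity of their duals) combined with the stability of hyperlinearity under topological generation from \cite[Theorem 3.6]{BCV}, applied to the preceding Proposition. Even your alternative route via coamenability/nuclearity is the same remark the paper makes immediately after its proof.
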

\begin{proof}
 This follows immediately from the fact that $S_4$ and $S_4^{\tau}$ are finite (and hence their duals are hyperlinear) and \cite[Theorem 3.6]{BCV}.
\end{proof}
Let us mention here that this result can be also proven by employing the fact that $C(S_4^+)$ is nuclear ($\widehat{S_4^+}$ is amenable). However, the above proof, as the proof of \cite[Theorem 3.6]{BCV}, are much more elementary than the proof of nuclearity of $C(S_4^+)$.

\section*{Acknowledgement} The author would like to thank Piotr M.~So{\l}tan, his advisor, for patient guidance and for directing the author towards the questions studied in this note as well as for careful reading of this manuscript. The author was partially supported by the NCN (National Center of Science) grant 2015/17/B/ST1/00085.

\bibliography{embeddings}

\begin{thebibliography}{10}

\bibitem{Ban99}
{\sc T.~Banica}, {\em Symmetries of a generic coaction}, Math. Ann., 314
  (1999), pp.~763--780.

\bibitem{Ban14}
\leavevmode\vrule height 2pt depth -1.6pt width 23pt, {\em Truncation and
  duality results for {H}opf image algebras}, Bull. Pol. Acad. Sci. Math., 62
  (2014), pp.~161--180.

\bibitem{BB09}
{\sc T.~Banica and J.~Bichon}, {\em Quantum groups acting on 4 points}, J.
  Reine Angew. Math., 626 (2009), pp.~75--114.

\bibitem{BB10}
\leavevmode\vrule height 2pt depth -1.6pt width 23pt, {\em Hopf images and
  inner faithful representations}, Glasg. Math. J., 52 (2010), pp.~677--703.

\bibitem{BS09}
{\sc T.~Banica and R.~Speicher}, {\em Liberation of orthogonal {L}ie groups},
  Adv. Math., 222 (2009), pp.~1461--1501.

\bibitem{Bic00}
{\sc J.~Bichon}, {\em Quelques nouvelles d\'eformations du groupe
  sym\'etrique}, C. R. Acad. Sci. Paris S\'er. I Math., 330 (2000),
  pp.~761--764.

\bibitem{BY14}
{\sc J.~Bichon and R.~Yuncken}, {\em Quantum subgroups of the compact quantum
  group {$\rm SU_{-1}(3)$}}, Bull. Lond. Math. Soc., 46 (2014), pp.~315--328.

\bibitem{BCV}
{\sc M.~{Brannan}, B.~{Collins}, and R.~{Vergnioux}}, {\em {The Connes
  embedding property for quantum group von Neumann algebras}}, ArXiv e-prints,
  (2014).

\bibitem{Cur11}
{\sc S.~Curran}, {\em A characterization of freeness by invariance under
  quantum spreading}, J. Reine Angew. Math., 659 (2011), pp.~43--65.

\bibitem{DKSS}
{\sc M.~Daws, P.~Kasprzak, A.~Skalski, and P.~M. So{\l}tan}, {\em Closed
  quantum subgroups of locally compact quantum groups}, Adv. Math., 231 (2012),
  pp.~3473--3501.

\bibitem{Doi93}
{\sc Y.~Doi}, {\em Braided bialgebras and quadratic bialgebras}, Comm. Algebra,
  21 (1993), pp.~1731--1749.

\bibitem{GSS}
{\sc M.~Golubitsky, I.~Stewart, and D.~G. Schaeffer}, {\em Singularities and
  groups in bifurcation theory. {V}ol. {II}}, vol.~69 of Applied Mathematical
  Sciences, Springer-Verlag, New York, 1988.

\bibitem{PJphd}
{\sc P.~J{\'o}ziak}, {\em Hopf images in Locally compact quantum groups}, PhD
  thesis, IMPAN, 2016.

\bibitem{JKS16}
{\sc P.~{J{\'o}ziak}, P.~{Kasprzak}, and P.~M. {So{\l}tan}}, {\em {Hopf images
  in locally compact quantum groups}}, preprint.

\bibitem{KN13}
{\sc M.~Kalantar and M.~Neufang}, {\em From quantum groups to groups}, Canad.
  J. Math., 65 (2013), pp.~1073--1094.

\bibitem{KS09}
{\sc C.~K{\"o}stler and R.~Speicher}, {\em A noncommutative de {F}inetti
  theorem: invariance under quantum permutations is equivalent to freeness with
  amalgamation}, Comm. Math. Phys., 291 (2009), pp.~473--490.

\bibitem{KS12}
{\sc D.~Kyed and P.~M. So{\l}tan}, {\em Property ({T}) and exotic quantum group
  norms}, J. Noncommut. Geom., 6 (2012), pp.~773--800.

\bibitem{Pat13}
{\sc I.~Patri}, {\em Normal subgroups, center and inner automorphisms of
  compact quantum groups}, Internat. J. Math., 24 (2013), pp.~1350071, 37.

\bibitem{Pod95}
{\sc P.~Podle{\'s}}, {\em Symmetries of quantum spaces. {S}ubgroups and
  quotient spaces of quantum {${\rm SU}(2)$} and {${\rm SO}(3)$} groups}, Comm.
  Math. Phys., 170 (1995), pp.~1--20.

\bibitem{Sch96}
{\sc P.~Schauenburg}, {\em Hopf bi-{G}alois extensions}, Comm. Algebra, 24
  (1996), pp.~3797--3825.

\bibitem{SS16}
{\sc A.~{Skalski} and P.~M. {So{\l}tan}}, {\em {Quantum families of invertible
  maps and related problems}}, Canad. J. Math., 68 (2016), pp.~698--720.

\bibitem{Wang98}
{\sc S.~Wang}, {\em Quantum symmetry groups of finite spaces}, Comm. Math.
  Phys., 195 (1998), pp.~195--211.

\bibitem{Wor87}
{\sc S.~L. Woronowicz}, {\em Compact matrix pseudogroups}, Comm. Math. Phys.,
  111 (1987), pp.~613--665.

\bibitem{Wor95}
\leavevmode\vrule height 2pt depth -1.6pt width 23pt, {\em Compact quantum
  groups}, in Sym\'etries quantiques ({L}es {H}ouches, 1995), North-Holland,
  Amsterdam, 1998, pp.~845--884.

\end{thebibliography}
\bibliographystyle{siam}
\end{document}